\theoremstyle{plain}
\theoremstyle{plain}
\newtheorem{lem}{Lemma}
\newtheorem{prop}{Proposition}
\newtheorem{cor}{Corollary}
\newtheorem{ex}{Example}
\newtheorem{as}{Assumption}
\theoremstyle{definition}
\newtheorem{defi}{Definition}
\newtheorem{rem}{Remark}
 \newcommand{\R}{\ensuremath{\Bbb{R}}}
 \newcommand{\E}{\ensuremath{\Bbb{E}}}
\newcommand{\Levy}{L\'{e}vy\ } 
\newcommand{\levy}{L\'{e}vy} 
\newcommand{\Ito}{It\^{o} }
\newcommand{\leb}{\ensuremath{\mathit Leb}}
\newcommand{\indicator}{\ensuremath{\mathbb{I}}}
\def\N{{I\!\!N}}
\begin{document}
\title{Recent advances in ambit stochastics with a view towards tempo-spatial stochastic volatility/intermittency}

\author{Ole E.\ Barndorff-Nielsen\footnote{
Thiele Center, 
Department of Mathematical Sciences
\&  CREATES,
 School of Economics and Management,
Aarhus University,
Ny Munkegade 118,
DK-8000 Aarhus C, Denmark,
{\tt oebn\@@imf.au.dk}}\\
Aarhus University 
\vspace{0.5cm}
 \and 
Fred Espen Benth\footnote{
 Centre of Mathematics for Applications,
 University of Oslo,
 P.O. Box 1053, Blindern,
 N-0316 Oslo, Norway
{\tt fredb\@@math.uio.no}}\\
University of Oslo 
\vspace{0.5cm}
\and Almut E.\ D.\ Veraart\footnote{
 Department of Mathematics,
Imperial College London,
180 Queen's Gate,
SW7 2AZ London, 
 UK,
 {\tt a.veraart\@@imperial.ac.uk}}\\
 Imperial College London \& CREATES
}

\maketitle
\begin{abstract}
 Ambit stochastics is the name for the theory and applications of \emph{ambit fields} and \emph{ambit processes} and constitutes a new research area in stochastics for tempo-spatial phenomena. This paper gives an overview of  the main findings in ambit stochastics up to date and establishes new results on general properties of ambit fields. Moreover, it develops the concept of tempo-spatial stochastic  volatility/intermittency   within ambit fields.
Various types of volatility modulation ranging from stochastic scaling of the amplitude, to stochastic time change and extended subordination of random measures and to probability and \Levy mixing of volatility/intensity parameters will be developed.
 Important examples for concrete model specifications within the class of ambit fields are given. 
\end{abstract}
{\bf Keywords:} Ambit stochastics, random measure, \Levy basis, stochastic volatility, extended subordination, meta-times, non-semimartingales.\newline \ \newline
{\bf MSC codes:} 60G10, 
60G51, 
60G57, 
60G60, 
\section{Introduction}
Tempo-spatial stochastic models describe objects  which are influenced both by time and  location. They naturally arise in various  applications such as  agricultural and environmental  studies,  ecology,  meteorology,
geophysics, turbulence,  biology,    global economies and financial markets. 
Despite the fact that the aforementioned areas of application are very different
in nature, they pose some common challenging mathematical and statistical problems.
While there is a very comprehensive literature on both time series modelling, see e.g.~\cite{Hamilton1994, BrockwellDavis2002}, and also on modelling purely spatial phenomena, see e.g.~\cite{Cressie1993}, tempo-spatial stochastic  modelling has only recently become  one of the most challenging research frontiers in modern probability and statistics, see \cite{Finkenstadtetal2007,CressieWikle2011} for textbook treatments.
Advanced and novel methods from statistics,  probability, and stochastic analysis  are called for to address the difficulties in constructing and estimating  flexible and, at the same time, parsimoniously parametrised  stochastic tempo-spatial  models.
There are various challenging issues which need to be addressed when dealing with tempo-spatial data, starting from data collection, model building, model estimation and selection, and model validation up to prediction. 
This paper focuses on building 
a flexible, dynamic tempo-spatial modelling framework, in which we   develop the novel concept of \emph{tempo-spatial stochastic volatility/intermittency} which allows one  to model random \emph{volatility clusters} and fluctuations both in time and in space. 
Note that \emph{intermittency} is an alternative name for \emph{stochastic volatility}, used in particular in turbulence. 
The presence of stochastic volatility is an empirical fact  in a variety of scientific fields (including the ones mentioned above), see e.g.~\cite{Amiri2009,Huangetal2010, ShephardAndersen2009}. Despite its ubiquitousness and importance, however, this important quantity has so far been often overlooked in
the tempo-spatial literature. Possibly this is  due to the fact that 
 stochastic volatility induces   high mathematical complexity which is challenging  
both  in terms of model building as well as for  model estimation.

The concept of stochastic volatility needs to be defined with respect to a base model, which we introduce in the following.
While the models should be the best realistic description of the underlying
random phenomena, they also have to be treatable for further use in the
design of controls, or risk evaluation, or planning of engineering equipment
in the areas of application in which the tempo-spatial phenomenon is considered.
Stochastic models for such tempo-spatial systems are typically formulated
in terms of evolution equations, and they rely on the use of random fields.
We  focus on such random fields  which are defined in
terms of certain types of stochastic integrals with respect to random measures which can be regarded as a unifying framework which encompasses many of the traditional modelling classes.
We will work with random fields denoted by
$Y_t({\bf x}) \in \mathbb{R}$  with $Y_t({\bf x}):= Y({\bf x},t)$, where $t \in \mathbb{R}$ denotes the  temporal parameter and ${\bf x} \in \mathbb{R}^d$ denotes the  spatial parameter, where $d \in \N_0$. Typically, we have $d\in \{1, 2, 3\}$   representing, for instance, longitude, latitude and height.  Note that by choosing \emph{continuous} parameters $({\bf x},t)$, we can later  allow for considerable flexibility in the discretisation of the model (including in particular  the possibility of irregularly spaced data). This maximises the potential for wide  applications of the model. 
We expect that the random variables $Y_t({\bf x})$ and $Y_{t'}({\bf x}')$ are  correlated as soon as the points $({\bf x},t)$ and $({\bf x}',t')$ are \lq\lq proximate\rq\rq\ according to a suitable measure of distance. This idea can be formalised in terms of a set $A_t({\bf x})\subseteq  \mathbb{R}^d\times \mathbb{R}$ such that for all  $({\bf x}',t') \in A_t({\bf x})$, the random variables 
 $Y_t({\bf x})$ and $Y_{t'}({\bf x}')$ are correlated. The set $A_t({\bf x})$ is sometimes referred to as \emph{causality cone} and more recently as \emph{ambit set}, see \cite{BNSch04}, describing the sphere of influence of the random variable $Y_t({\bf x})$. A concrete example of an ambit set would, for instance, be given by  a  
 light cone or a sound cone.

As the base model for a tempo-spatial object, we  choose \vspace{-0.1in}
\begin{align}\label{base}
Y_t({\bf x}) = \int_{A_t({\bf x})} h({\bf x},t; {\boldsymbol \xi}, s)  L(d{\boldsymbol \xi},ds),
\end{align}
where $L$ is an infinitely divisible, independently scattered random measure, 
i.e.~a L\'{e}vy basis. Under suitable regularity conditions, our base model \eqref{base} can be linked to solutions of certain types of stochastic partial differential equations, which are often used for tempo-spatial modelling, see  \cite{BNBV2011} for details.  The kernel function $h: \mathbb{R}^d\times \mathbb{R} \times \mathbb{R}^d\times \mathbb{R} \to \mathbb{R}$ needs to satisfy some integrability conditions to ensure the existence of the integral, which we will study in detail in Section \ref{Section:Background}. 
 Note that the covariance structure of the base model is fully determined by the choice of the kernel function $h$ and the set $A_t({\bf x})$, see \cite{BNBV2009Forward}. In particular, by choosing a certain bounded set $A_t({\bf x})$, one can easily construct models which induce a covariance structure with bounded support; such models are typically sought after in  applications, which feature a certain  decorrelation time and distance.
Under suitable regularity assumptions on $h$ and on  $A_t({\bf x})$, see \cite{BNBV2011}, the random field defined in (\ref{base}) will be made stationary in time and homogeneous in space. 
It should be noted that in any concrete application, 
one  needs to account for  components in addition to the base model, such as a potential drift, trend and  seasonality, and  observation error on the data level.

One of the key questions we try to answer in this paper is the following one:
How can stochastic volatility/intermittency be introduced in our base model \eqref{base}?  We propose four complementary methods  for tempo-spatial volatility modulation. First, stochastic volatility can be introduced by stochastically changing  the amplitude of the \Levy basis $L$. This can be achieved by adding a stochastic integrand to the base model. This method has frequently been  used in the purely temporal case to account for volatility clusters. In the tempo-spatial case, one needs to establish a suitable stochastic integration theory, which allows for stochastic integrands in the form of random fields. Moreover, suitable models for tempo-spatial stochastic volatility fields need to be developed. 

In the purely temporal, examples typically used to model stochastic volatility are  e.g.~constant elasticity of variance processes, in particular, square root diffusions, see \cite{Cox1975,CIR1985}, Ornstein-Uhlenbeck (OU) processes, see \cite{OU1930} and more recently \cite{BNS2001a}, and supOU processes, see \cite{BN2000,BNStelzer2010b}. 
Second, stochastic volatility can be introduced by extended subordination of the \Levy basis. This concept can be viewed as an extension of the concept of stochastic time change as developed by \cite{Bochner1949}, see also \cite{VeraartWinkel2010} for further references,  to a tempo-spatial framework. Last, volatility modulation can be achieved by randomising a volatility/intensity parameter of the \Levy basis $L$. Here we will study both probability mixing and the new concept of \Levy mixing, which has recently been developed by \cite{BNPAT2012}. We will show how these two mixing concepts can be used to account for stochastic volatility/intermittency. 

Altogether, this paper  contributes to the area of  \emph{ambit stochastics}, which is  the name for the theory and application of \emph{ambit fields} and \emph{ambit processes}.
Ambit stochastics is a  new field of mathematical stochastics that has its origin in the study of turbulence, see e.g.~\cite{BNSch04}, but is in fact of broad applicability in science, technology and finance, in relation to modelling of spatio-temporal dynamic processes. E.g.~important applications of ambit stochastics include modelling turbulence in physics, see e.g.~\cite{BNSch04,BNSch09, Hedevang2012},  modelling tumour growth in biology, see \cite{BNSch07a,JSVJ2008}, and applications in financial mathematics, see \cite{BNBV2010a, VV2,BNBV2009Forward}. 

The outline for the remainder of this article is as follows.
Section \ref{Section:Background} reviews the concept of \Levy bases and integration with respect to \Levy bases, where the focus is on the integration theories developed by \cite{RajRos89} and \cite{W}.
Integrals with respect to \Levy bases are then used to establish the notion for our base model \eqref{base} and for general ambit fields and ambit processes in Section \ref{SectionAmbit}. In addition to reviewing the  general framework of ambit fields, we establish new smoothness and semimartingale conditions for ambit fields.
An important sub-class of ambit fields -- the so-called \emph{trawl processes}, which constitute a class of stationary infinitely divisible stochastic processes -- are then presented in 
Section \ref{Section:Subclasses}.  
Section \ref{Section:SpecVol} focuses on  volatility modulation and establishes four complementary concepts which can be used to model tempo-spatial stochastic volatility/intermittency: Stochastic scaling of the amplitude through a  stochastic integrand, time change and extended subordination of a random measure, and  probability and \Levy mixing. Finally, Section \ref{Section:Conclusion}  concludes and gives an outlook on future research.


\section{Background}\label{Section:Background} 
Ambit fields and ambit processes are constructed from so-called \Levy bases. We will now review the definition and key properties of such \Levy bases and then describe how stochastic integrals can be defined with respect to \Levy bases.

\subsection{Background on L\'{e}vy bases}\label{BackgroundLBasis}
Our review is based on the work by
 \cite{RajRos89,P}, where detailed proofs can be found. 

 Throughout the paper, we denote by $(\Omega, \mathcal{F}, P)$ a  probability space.  Also, let
$(S,\mathcal{S}, \leb)$ denote a Lebesgue-Borel space where 
 $S$ denotes a 
 Borel set in 
$\mathbb{R}^k$ for a $k \in \mathbb{N}$, e.g.~often we choose $S= \mathbb{R}^k$. Moreover  $\mathcal{S}=\mathcal{B}(S)$ is the Borel $\sigma$-algebra on $S$ and $\leb$ denotes the Lebesgue measure.
In addition, we define 
$$\mathcal{B}_b(S)= \{A \in \mathcal{S}: \leb(A) < \infty \},$$
which is the subset of $\mathcal{S}$ that contains sets which have bounded Lebesgue measure. 
Note that since $\leb$ is  $\sigma$-finite, we can deduce that $\mathcal{S}=\sigma(\mathcal{B}_b(S))$, see \citet[p.~399]{PeccatiTaqqu2008}. 
Also, the set $\mathcal{B}_b(S)$ is  
closed under finite union, relative complementation, and countable intersection and is therefore a $\delta$-ring. 



\subsubsection{Random measures}
Random measures play a key role in  ambit stochastics, hence we start off by recalling 
 the definition of a (full) random measure.
\begin{defi}  
\begin{enumerate}
\item By a \emph{random measure} $M$ on $(S,\mathcal{S})$ 
we mean a collection of $\R$-valued random variables   
$\left\{ M\left(A\right): A\in 
\mathcal{B}_b(S)\right\} $ such that for any
sequence $A_{1},A_{2},\dots$ of disjoint elements of 
$\mathcal{B}_b(S)$ 
 satisfying $\cup _{j=1}^{\infty }A_j\in \mathcal{B}_b(S)$
we have $M\left( \cup _{j=1}^{\infty }A_j\right) =\sum_{j=1}^{\infty }M\left(
A_{j}\right) $ a.s..
\item  By a \emph{full random measure} $M$ on $(S,\mathcal{S})$   
\ we mean a random object whose realisations are measures on $(S,\mathcal{S})$ 
a.s..
\end{enumerate}
\end{defi}
Note here that a realisation of a random measure is in general not an ordinary signed measure since it does not necessarily have finite variation. That is why we also introduced the term of a full random measure. Other articles or textbooks would sometimes call the quantity we have defined as a random measure as \emph{random noise} to stress that it might not be a (signed) measure, see \citet[p.~118]{STaqqu1994} for a discussion of this aspect.

In some applications, we work  with stationary random measures, which are defined as follows.
\begin{defi}
 A (full) random measure on
$\mathcal{S}$ 
 is said to be \emph{stationary} if for any ${\bf s}\in S$ and any
finite collection $A_{1},A_{2},\dots,A_{n}$ of elements (of $\mathcal{B}(%
S)$) of $\mathcal{B}_{b}(\mathcal{S})$ the random vector $$\left(
M\left( A_{1}+{\bf s}\right) ,M\left( A_{2}+{\bf s}\right) ,\dots,M\left( A_{n}+{\bf s}\right)
\right) $$ has the same law as $\left( M\left( A_{1}\right) ,M\left(
A_{2}\right) ,\dots,M\left( A_{n}\right) \right) $.
\end{defi}
The above definition ensures that a random measure is stationary in all components. One could also study stationarity in the individual components separately.

\subsubsection{\Levy bases}
In this paper,  we work with a special class of random measures, called \emph{\Levy bases}. 
Before we can define them, we define independently scattered random measures.
\begin{defi}
A \emph{random measure} $M$ on $(S,\mathcal{S})$ 
is \emph{independently scattered} if 
 for any
sequence $A_{1},A_{2},\dots$ of disjoint elements of 
$\mathcal{B}_b(S)$, the random variables  
$M(A_{1}), M(A_{2}),\dots$ are independent.
\end{defi}

Recall the definition of infinite divisibility of a distribution.
\begin{defi}
The law $\mu$ of a random variable on $\R$ is \emph{infinitely divisible} (ID) if for any $n \in \N$, there exists a law $\mu_n$ on $\R$ such that $\mu = \mu_n^{*n}$, where $\mu_n^{*n}$ denotes the $n$-fold convolution of $\mu_n$ with itself.
\end{defi}
In the following, we are interested in ID random measures, which we define now.
\begin{defi} 
 A random measure $M$\ on $(S,\mathcal{S})$\ is said to
be infinitely divisible if for any finite collection $A_{1},A_{2},\dots,A_{n}$
of elements of $\mathcal{B}_b(S)$ the random vector $\left(
M\left( A_{1}\right) ,M\left( A_{2}\right) ,\dots,M\left( A_{n}\right) \right) 
$ is infinitely divisible.
\end{defi}
Let us study one relevant example.
\begin{ex} Assume that $M$\ is an absolutely continuous full
random measure on $(S,\mathcal{S})$ with a density $m $\ and suppose that the
stochastic process $\left\{ m \left( x\right) \right\} _{x \in S} $\ is non-negative and infinitely divisible. Then $M$\ is an infinitely
divisible full random measure.
\end{ex}
Now we can give the definition of a \Levy basis.
\begin{defi}
\begin{enumerate}
\item 
A \emph{L\'{e}vy basis} $L$ on $(S,\mathcal{S})$ is an
 independently scattered, infinitely divisible random measure.
\item 
A \emph{homogeneous L\'{e}vy basis} on $(S,\mathcal{S})$
is a stationary, 
 independently scattered, infinitely divisible random measure.
\end{enumerate}
\end{defi}

\subsubsection{L\'{e}vy-Khintchine formula and
L\'{e}vy-It\^{o} decomposition
}
Since a \Levy basis $L$ is ID, it has 
a \levy-Khintchine representation. I.e.~let $\zeta \in \R$ and $A \in \mathcal{B}_b(S)$, then  
\begin{align}\begin{split}\label{LevKhi}
C\{\zeta \ddagger L(A)\}&=\log\left(\mathbb{E}(\exp(i \zeta L(A))\right)\\
& = i \zeta a^*(A) - \frac{1}{2}\zeta^2 b^*(A) + \int_{\mathbb{R}}\left(e^{i\zeta x}-1-i\zeta x \mathbb{I}_{[-1,1]}(x)\right) n(dx, A), 
\end{split}\end{align}
where, according to  \citet[Proposition 2.1 (a)]{RajRos89},  
$a^*$ is a signed measure on $\mathcal{B}_b(S)$, $b^*$ is a measure on $\mathcal{B}_b(S)$, and 
$n(\cdot,\cdot)$ is the generalised L\'{e}vy measure, i.e.~$n(dx, A)$ is a L\'{e}vy measure on $\mathbb{R}$ for fixed $A \in \mathcal{B}_b(S)$ and a measure on $\mathcal{B}_b(S)$ for fixed $dx$.

Next, we define the \emph{control measure} as introduced in \citet[Proposition 2.1 (c), Definition 2.2]{RajRos89}.
\begin{defi} 
Let $L$ be a \Levy basis with \levy-Khintchine representation (\ref{LevKhi}). 
Then, the measure 
 $c$ is  defined   by
\begin{align}\label{ControlM}
c(B) = |a^*|(B) + b^*(B) + \int_{\R}\min(1,x^2)n(dx,B),\qquad B \in \mathcal{S}_b,
\end{align}
 where $|\cdot|$ denotes the total variation.
 The extension of the measure $c$ to a $\sigma$-finite measure on 
$(S, \mathcal{S})$
 is called the \emph{control measure} measure of $L$.
 \end{defi} 
 Based on the control measure we can now characterise the generalised \Levy measure $n$ further, see \citet[Lemma 2.3, Proposition 2.4]{RajRos89}.  
 First of all, we define the Radon-Nikodym derivatives of the three components of $c$, which are given by
 \begin{align}\label{CQ}
 a({\bf z}) = \frac{da^*}{dc}({\bf z}), &&
 b({\bf z}) = \frac{db^*}{dc}({\bf z}), &&
 \nu(dx, {\bf z}) = \frac{n(dx,\cdot)}{dc}({\bf z}).
 \end{align}
 Hence, we have in particular that
 $n(dx, d{\bf z}) = \nu(dx, {\bf z}) c(d{\bf z})$.
Without loss of generality we can assume  that $\nu(dx, {\bf z})$ is a \Levy measure for each fixed ${\bf z}$ and hence we do so in the following.

\begin{defi}\label{CondCQ}
We call $(a, b, \nu(dx, \cdot), c)=(a({\bf z}), b({\bf z}), \nu(dx, {\bf z}), c(d{\bf z}))_{{\bf z}\in S}$ a  \emph{characteristic quadruplet} (CQ) associated with a \Levy basis $L$ on 
$(S, \mathcal{S})$
 provided the following conditions hold:
\begin{enumerate}
\item 
Both $a$ and $b$ are functions on 
$S$, 
 where $b$ is restricted to be non-negative. 
\item  For fixed ${\bf z}$, $\nu(dx,{\bf z})$ is  a \Levy measure on $\R$, and, for fixed $dx$, it is a measurable function on $S$. 
\item The control measure $c$ is a measure on
$(S, \mathcal{S})$
 such that $\int_B a({\bf z})c(d{\bf z})$ is a (possibly signed) measure on 
 $(S, \mathcal{S})$,
 $\int_B b({\bf z})c(d{\bf z})$ is a  measure on
$(S, \mathcal{S})$
and 
$\int_B\nu(dx,{\bf z})c(d{\bf z})$ is a \Levy measure on $\R$ for fixed 
$B \in \mathcal{S}$.
\end{enumerate}
\end{defi}
We have seen that every \Levy basis on $(S, \mathcal{S})$ determines a CQ of the form 
$(a, b, \nu(dx, \cdot), c)=(a({\bf z}), b({\bf z}), \nu(dx, {\bf z}), c(d{\bf z}))_{{\bf z}\in S}$.
And, conversely, every CQ satisfying the conditions in Definition \ref{CondCQ} determines, in law, a \Levy basis on $(S, \mathcal{S})$ .

In a next step, we relate the notion of \Levy bases and CQs to the concept of Poisson random measures and their compensators.

\begin{defi}
A \Levy basis on $(S, \mathcal{S})$  is \emph{dispersive} if its control measure $c$ satisfies $c(\{{\bf z}\})=0$ for all ${\bf z}\in S$.
\end{defi}
For a dispersive L\'{e}vy basis $L$  on $(S, \mathcal{S})$ with characteristic quadruplet $(a, b, \nu, c)$  there is a modification $L^*$ with the same characteristic quadruplet which has following L\'{e}vy-It\^{o} decomposition:
\begin{align}\label{LevyIto}
L^*(A) 
&= a^*(A)+ W(A) +
\int_{\{|y|\leq 1\}} y(N- n)(dy,A)
+ \int_{\{|y|> 1\}} y N(dy,A), 
\end{align}
for
$A\in \mathcal{B}_b(S)$ and  
for a Gaussian basis $W$ (with characteristic quadruplet $(0, b, 0, c)$, i.e. $W(A)\sim N(0,\int_Ab({\bf z})c(d{\bf z}))$), and a Poisson basis $N$ (independent of $W$) with compensator
$%
n\left( dy;A\right) =\mathrm{E}\left\{ N\left( dy;A\right) \right\}$ where $n(dx,d{\bf z})= \nu(dx, {\bf z}) c(d{\bf z})$, cf.~\cite{P} and \citet[Theorem 2.2]{BNStelzer2010b}

It is also possible to write (\ref{LevyIto}) in infinitesimal form by
\begin{align}\label{LIi}
L^*(d{\bf z}) = a^*(d{\bf z}) + W(d{\bf z}) + \int_{\{|x|>1\}}x N(dx, d{\bf z}) + \int_{\{|x|\leq 1\}} x(N-n)(dx, d{\bf z}).
\end{align}
This is particularly useful in the context of the \levy-Khintchine representation, which can then also be expressed in infinitesimal form by
\begin{align}\begin{split}\label{LevKhii}
C \{\zeta \ddagger L(d{\bf z})\}&=\log\left(\mathbb{E}(\exp(i \zeta L(d{\bf z}))\right)\\
& = i \zeta a^*(d{\bf z}) - \frac{1}{2}\zeta^2 b^*(d{\bf z}) + \int_{\mathbb{R}}\left(e^{i\zeta x}-1-i\zeta x \mathbb{I}_{[-1,1]}(x)\right) n(dx, d{\bf z})\\
& = \left(i \zeta a({\bf z}) - \frac{1}{2}\zeta^2 b({\bf z}) + \int_{\mathbb{R}}\left(e^{i\zeta x}-1-i\zeta x \mathbb{I}_{[-1,1]}(x)\right) \nu(dx, {\bf z})\right)c(d{\bf z})\\
&= C\{\zeta \ddagger L'({\bf z})\} c(d{\bf z}), \qquad \zeta \in \R,
\end{split}\end{align}
where $L'({\bf z})$ denotes the \emph{Levy seed} of $L$ at ${\bf z}$.  Note that  $L'({\bf z})$ is defined as the infinitely divisible random variable having \levy-Khintchine representation 
\begin{align}\label{CharL'}
 C\{\zeta \ddagger L'({\bf z})\} = i \zeta a({\bf z}) - \frac{1}{2}\zeta^2 b({\bf z}) + \int_{\mathbb{R}}\left(e^{i\zeta x}-1-i\zeta x \mathbb{I}_{[-1,1]}(x)\right) \nu(dx, {\bf z}).
\end{align}

\begin{rem}
We can associate a \Levy process with any \Levy seed. In particular, let $L'({\bf z})$ denote the \Levy seed of $L$ at ${\bf z}$. Then, $(L_t'({\bf z}))_t$ denotes the \Levy process generated by $L'({\bf z})$, which is defined as the \Levy process whose law is determined by $L_1'({\bf z}) \stackrel{law}{=} L'({\bf z})$.
\end{rem}

\begin{defi}
Let $L$ denote a \Levy basis on $(S, \mathcal{S})$ with CQ given by $(a,b, \nu(dx,\cdot),c)$.
\begin{enumerate}
\item 
If $\nu(dr, {\bf z})$ does not depend on ${\bf z}$, we call  $L$ \emph{factorisable}. 
\item
If $L$ is factorisable and if $c$ is proportional to the Lebesgue measure and $a({\bf z})$ and $b({\bf z})$ do not depend on $z$, then $L$ is called \emph{homogeneous}. In that case we write $c(d{\bf z}) = v \leb(d{\bf z}) = v d{\bf z}$ for a positive constant $v>0$ and where $\leb(\cdot)$ denotes the Lebesgue measure.
\end{enumerate}
\end{defi} 
In order to simplify the exposition, we will throughout this paper assume that in the case of a homogeneous \Levy basis the constant $v$ is set to 1, i.e.~the measure $c$ is given by the Lebesgue measure.

\subsubsection{Examples of \Levy bases}
Let us study some examples of \Levy bases $L$ on $(\R^k, \mathcal{B}(\R^k))$  with CQ $(a, b, \nu(dx, \cdot), c)$.
\begin{ex}[Gaussian \Levy basis]
When 
 $\nu(dx,{\bf z}) \equiv 0$, then $L$ constitutes a
Gaussian \Levy basis with  $L(A)\sim\mathit{N}\left(\int_A a({\bf z})c(d{\bf z}), \int_A b({\bf z})c(d{\bf z})\right)$, for $A\in \mathcal{B}_b(\R^k)$. If, in addition, $L$ is homogeneous, then $L(A)\sim\mathit{N}\left(a \leb(A), b \leb(A)\right)$.
\end{ex}

\begin{ex}[Poisson \Levy  basis]
When $c(d{\bf z}) = d{\bf z}$ and $a\equiv b \equiv 0$ and $\nu(dx;{\bf z})= \lambda({\bf z}) \delta_1(dx)$, where $\delta_1$ denotes the Dirac measure with point mass at 1 and $\lambda({\bf z}) > 0$ is the intensity function, then $L$ constitutes a  Poisson \Levy basis. If, in addition, $L$ is factorisable, i.e.~$\lambda$ does not depend on ${\bf z}$, then 
   $L(A)\sim \mathit{Poisson}\left(\lambda\leb(A)\right)$, for all $A\in \mathcal{B}_b(\R^k)$.
\end{ex}

\begin{ex}[Gamma \Levy basis] Suppose that $c(d{\bf z}) = d{\bf z}$, $a \equiv b \equiv 0$ and the (generalised) \Levy measure is of the form
$\nu(dx; {\bf z}) = x^{-1}e^{-\alpha({\bf z})x}dx$,
where $\alpha({\bf z})> 0$. In that case, we call the corresponding \Levy basis $L$ a \emph{gamma \Levy basis}. If, in addition, $L$ is factorisable, i.e.~the function $\alpha$ does not depend on the parameter ${\bf z}$, then $L(A)$ has a gamma law for all $A\in \mathcal{B}_b(\R^k)$.
\end{ex}

\begin{ex}[Inverse Gaussian \Levy basis] Suppose that $c(d{\bf z}) = d{\bf z}$, $a \equiv b \equiv 0$ and the (generalised) \Levy measure is of the form
$\nu(dx; {\bf z}) = x^{-3/2}e^{-\frac{1}{2}\gamma^2({\bf z})x}dx$,
where $\gamma({\bf z})> 0$. Then we call the corresponding \Levy basis $L$ an \emph{inverse Gaussian \Levy basis}. If, in addition, $L$ is factorisable, i.e.~the function $\gamma$ does not depend on the parameter ${\bf z}$, then $L(A)$ has an inverse Gaussian law for all $A\in \mathcal{B}_b(\R^k)$.
\end{ex}

\begin{ex}[\Levy process]
If $k=1$, i.e.~$L$ is a \Levy basis on $\R$, then $L([0,t])=L_t$, $t\geq 0$ is a \Levy process.
\end{ex}

\subsection{Integration concepts with respect to a L\'{e}vy basis }\label{SectWalsh}
In order to build  relevant models based on \Levy bases, we need a suitable integration theory. In the following, we will briefly review the integration theory developed by \cite{RajRos89} and also the one by \cite{W}, and we refer  to \cite{BNBV2011} for a detailed overview on integration concepts with respect to \Levy bases, see also   \cite{DalangQuer2011} for a related review and \cite{BGP2012} for details on integration with respect to multiparameter processes with stationary increments.

\subsubsection{The integration concept  by \cite{RajRos89}}
According to \citet[p.460]{RajRos89}, integration of suitable deterministic functions with respect to \Levy bases can be defined as follows.
First define an integral for simple functions:
\begin{defi}\label{RajRosSimpleInt}
Let $L$ be a \Levy basis on $(S, \mathcal{S})$. 
Define a simple function on $S$, i.e.~let $f:=\sum_{j=1}^nx_j \mathbb{I}_{A_j}$, where  $A_j \in \mathcal{B}_b(S)$, for $j=1, \dots,n$,  are disjoint.
Then, one defines the integral, for every $A \in \sigma(\mathcal{B}_b(S))=\mathcal{S}$, by
\begin{align*}
\int_A f dL := \sum_{j=1}^n x_j L(A\cap A_j).
\end{align*}
\end{defi}
The integral for general  
 measurable functions can be derived by a limit argument.
 \begin{defi}
 Let $L$ be a \Levy basis on $(S, \mathcal{S})$.  A measurable function $f:(S, \mathcal{S}) \mapsto (\R, \mathcal{B}(\R))$ is called $L$-\emph{measurable} if there exists a sequence $\{f_n\}$ of simple function  as in Definition \ref{RajRosSimpleInt}, such that
 \begin{itemize}
 \item $f_n \to f$ a.e.~(i.e. $c$-a.e., where $c$ is the control measure of $L$),
 \item for every $A \in \mathcal{S}$, the sequence of simple integrals $\{\int_A f_n dL\}$ converges in probability, as $n \to \infty$. 
 \end{itemize}
 For integrable measurable functions,  define
 \begin{align*}
 \int_A f dL := \mathbb{P}-\lim_{n \to \infty} \int_A f_n dL.
 \end{align*}
  \end{defi}
 \cite{RajRos89} have pointed out that the above integral is well-defined in the sense that it does not depend on the approximating sequence $\{f_n\}$.
 Also, the necessary and sufficient conditions for the existence of the integral $\int f dL$ can be expressed in terms of the characteristics of $L$ and can be found in \citet[Theorem 2.7]{RajRos89}, which says the following. Let  $f:(S, \mathcal{S}) \mapsto (\R, \mathcal{B}(\R))$ be a measurable function.
 Let $L$ be a \Levy basis with CQ $(a, b, \nu(dx, \cdot), c)$.
Then $f$ is integrable w.r.t.\  $L$ if and only if the following three conditions are satisfied:
\begin{align}\label{RajRosIntCond}
\int_S | V_1(f({\bf z}),{\bf z})|c(d{\bf z}) < \infty,&&
 \int_S |f({\bf z})|^2  b({\bf z})c(d{\bf z}) < \infty, &&
\int_S V_2(f({\bf z}),{\bf z})c(d{\bf z}) < \infty,
\end{align}
 where for $\varrho(x) := x\indicator_{[-1, 1]}(x)$,
\begin{align*}
V_1(u,{\bf z}):= u a({\bf z}) +\int_{\mathbb{R}}\left(\varrho(xu) -u\varrho(x)\right)\nu(dx, {\bf z}),&&
V_2(u,{\bf z}) := \int_{\mathbb{R}} \min(1,|xu|^2)\nu(dx, {\bf z}).
\end{align*}

 Note that such integrals have been defined for \emph{deterministic} integrands. However, in the context of ambit fields, which we will focus on in this paper, we typically encounter  stochastic integrands representing  stochastic volatility, which  tends to be present in most applications we have in mind. 
 Since we often work under the independence assumption that the stochastic volatility $\sigma$ and the \Levy basis $L$ are independent, it has been suggested to work with conditioning to extend the definition by \cite{RajRos89} to allow for stochastic integrands.
 An alternative concept, which directly allows for stochastic integrands which can be dependent of the \Levy basis, is the integration concept by  \cite{W}, which we study next.

\subsubsection{Integration w.r.t.~martingale measures introduced by \cite{W}}
The integration theory due to \cite{W} can be regarded as It\^{o} integration extended to random fields.
 In the following we will present the integration theory on a bounded domain and comment later on how one can extend the theory to the case of an unbounded domain.
 
 Here we  treat \emph{time} and \emph{space} separately, which allows us to work with a natural ordering (introduced by time) and to relate the integrals w.r.t.~to \Levy bases to martingale measures.
 In the following, we denote by $S$  a \emph{bounded} Borel set in $\mathcal{X}=\mathbb{R}^d$ for a $d \in \mathbb{N}_0$ (where $d+1 = k$) and  $\mathcal{S}=\mathcal{B}(S)$ denotes the Borel $\sigma$-algebra on $S$. Since $S$ is bounded, we have in fact $\mathcal{S}=\mathcal{B}(S)=\mathcal{B}_b(S)$.
 
Let $L$ denote a \Levy basis on $(S\times  [0,T], \mathcal{B}(S\times  [0,T]))$  for some $T>0$. 
For any  $A \in \mathcal{B}_b(S)$ and $0 \leq t \leq T$, we define
\begin{align*}
L_t(A) = L(A,t) = L( A \times (0,t]).
\end{align*}
Here $L_t(\cdot)$ is a measure-valued process, and  for a fixed set $A \in \mathcal{B}_b(S)$, $L_t(A)$ is an additive process in law.
In the following, we want to use the $L_t(A)$ as integrators as in \cite{W}. In order to do that, we work under the square-integrability assumption, i.e.:

\begin{description}
\item[Assumption (A1):]
For each $A \in \mathcal{B}_b(S)$, we have that $L_t(A) \in L^2(\Omega, \mathcal{F},  P)$.
\end{description} 
In the following, we will, unless otherwise stated, work without loss of generality under the zero-mean assumption on $L$, i.e.\
\begin{description}
\item[Assumption (A2):]
For each $A \in \mathcal{B}_b(S)$, we have that $\mathbb{E}(L_t(A))=0$.
\end{description} 

Next,  we define the filtration  $\mathcal{F}_t$ by 
\begin{align}\label{filt}
\mathcal{F}_t = \cap_{n=1}^{\infty} \mathcal{F}_{t+1/n}^0,&& \text{where}\qquad  \mathcal{F}_t^0 =\sigma \{L_s(A): A \in \mathcal{B}_b(S), 0 < s \leq t \}\vee \mathcal{N},
\end{align}
and 
where $\mathcal{N}$ denotes the $P$-null sets of $\mathcal{F}$.
Note that  $\mathcal{F}_t$ is right-continuous by construction.
One can show that under the assumptions (A1) and (A2) and for fixed $A \in \mathcal{B}_b(S)$, $(L_t(A))_{0\leq t \leq T}$ is a (square-integrable) \emph{martingale} with respect to the filtration $(\mathcal{F}_t)_{0\leq t \leq T}$. 
Note that these two properties together with 
the fact that $L_0(A)=0$ a.s.\ 
ensure that $(L_t(A))_{t \geq 0, A \in \mathcal{B}(R^d)}$ is a \emph{martingale measure} with respect to $(\mathcal{F}_t)_{0\leq t \leq T}$ in the sense of \cite{W}. 
Furthermore, we have the following  orthogonality 
property: If $A, B \in \mathcal{B}_b(S)$ with $A \cap B = \emptyset$, then $L_t(A)$ and $L_t(B)$ are independent. 
Martingale measures which satisfy such an orthogonality property are referred to as 
 \emph{orthogonal martingale measures} by  \cite{W}, see also \cite{BNBV2011} for more details. Note that orthogonal martingale measure are \emph{worthy}, see \citet[Corollary 2.9]{W}, a property which makes  them suitable as integrators.
For such orthogonal martingale measures, \cite{W} introduces their   \emph{covariance measure}  $Q$ by
\begin{align}\label{CovMeasureQ}
Q(A \times [0,t]) = \ <L(A)>_t,
\end{align} 
for $A \in \mathcal{B}(S)$.
Note that  $Q$ is a positive measure and is used by \cite{W}   when defining stochastic integration with respect to $L$. 

\cite{W} defines stochastic integration in the following way.
Let $\zeta(\xi,s)$ be an \emph{elementary}
 random field $\zeta(\xi,s)$, i.e.\ it has the form
\begin{equation}\label{elrv}
\zeta({\boldsymbol \xi},s,\omega)=X(\omega)\indicator_{(a,b]}(s)\indicator_A({\boldsymbol \xi})\,,
\end{equation} 
where $0\leq a<t$, $a\leq b$, $X$ is bounded and $\mathcal{F}_a$-measurable, and $A\in\mathcal{S}$. For such elementary
functions, the stochastic integral with respect to $L$ can be defined as
\begin{equation}
\int_0^t\int_{B}\zeta({\boldsymbol \xi},s)\,L(d{\boldsymbol \xi},ds):=X\left(L_{t\wedge b}(A\cap B)-L_{t\wedge a}(A\cap B)\right)\,,
\end{equation}
for every $B\in\mathcal{S}$. 
It turns out that  the stochastic integral becomes a martingale measure itself in $B$ (for fixed $a, b, A$). 
Clearly, the above integral can easily be generalised to allow for integrands given by {\it simple} random fields, i.e.\ 
finite linear combinations of elementary random fields. 
Let $\mathcal{T}$ denote the
 set of simple random fields and let the {\it predictable} $\sigma$-algebra $\mathcal{P}$ be the 
$\sigma$-algebra generated by $\mathcal{T}$. Then we call  a random field  {\it predictable} provided 
it is $\mathcal{P}$-measurable. 
The aim is now to define stochastic integrals with respect to $L$ where the integrand is given by a predictable random field.

In order to do that \cite{W} defines a norm $\|\cdot\|_L$ on the predictable random fields $\zeta$ by
\begin{equation}\label{norm}
\|\zeta\|_L^2:=\E\left[\int_{[0,T]\times S}\zeta^2({\boldsymbol \xi},s)\,Q(d{\boldsymbol \xi}, ds)\right]\,,
\end{equation}
which determines  the Hilbert space $\mathcal{P}_L:=L^2(\Omega\times[0,T]\times S,\mathcal{P}, Q)$, which is the space of predictable random fields $\zeta$ with    $\|\zeta\|_L^2< \infty$, and he  shows that 
$\mathcal{T}$ is dense in $\mathcal{P}_L$. 
Hence, in order to define the stochastic integral of $\zeta\in\mathcal{P}_L$, one can  choose an approximating
sequence $\{\zeta_n\}_n\subset\mathcal{T}$ such that $\|\zeta-\zeta_n\|_L\rightarrow 0$ as $n\rightarrow\infty$. Clearly, for each $A\in\mathcal{S}$, $\int_{[0,t]\times A}\zeta_n({\boldsymbol \xi},s)\,L(d{\boldsymbol \xi},ds)$ is a Cauchy sequence
in $L^2(\Omega,\mathcal{F},P)$, and thus there exists a limit which is defined as the stochastic integral
of $\zeta$. 

Then, this stochastic integral  is again a martingale measure and satisfies the following \emph{It\^{o}-type isometry}:
\begin{equation}\label{ItoIs}
\E\left[\left(\int_{[0,T]\times S}\zeta({\boldsymbol \xi},s)L(d{\boldsymbol \xi}, ds)\right)^2\right]=\|\zeta\|_L^2\, ,
\end{equation}
see \cite[Theorem 2.5]{W} for more details. 

\subsubsection{Relation between the two integration concepts}
The relation between the two different integration concept has been discussed in \citet[pp.~60--61]{BNBV2011}, hence we only 
mention it briefly here.

Note that the \cite{W} theory defines the stochastic integral as the $L^2$-limit of simple random fields, whereas \cite{RajRos89} work with the  $P$-limit. \cite{BNBV2011} point out  that \emph{deterministic} integrands, which are integrable  in the sense of Walsh, are thus also integrable in the Rajput and Rosinski sense since the control measure of \cite{RajRos89} and the covariance measure of \cite{W} are equivalent.


\section{General aspects of the theory of ambit fields and processes}\label{SectionAmbit}
 In the following we will show how stochastic processes and random fields can be constructed based  on \Levy bases, which leads us to the general framework of ambit fields.
This section reviews the concept of ambit fields and ambit processes. For a detailed account on this topic see \cite{BNBV2011} and \cite{BNSch07a}.

 \subsection{The general framework}\label{SubsectAmPr}
The general framework for defining an ambit process is as follows. Let
  $Y=\left\{ Y_{t}\left( {\bf x}\right) \right\} $  with $Y_t({\bf x}):= Y({\bf x},t)$  denoting a stochastic field in
space-time $\mathcal{X}\times \mathbb{R}$.
In most applications, the space $\mathcal{X}$ is  chosen to be $\mathbb{R}^{d}$ for $d=1,2$ or $3$.
 Let $\varpi \left( \theta
\right) =\left({\bf x}\left( \theta \right),t\left( \theta \right) \right) $ denote a curve in $\mathcal{X}\times \mathbb{R}$.
The  values of the field along the
curve  are then given by $X_{\theta }=Y_{t\left( \theta
\right) }\left( {\bf x}\left( \theta \right) \right)$.
 Clearly, $X=\left\{ X_{\theta }\right\}$ denotes a stochastic process.  Further, 
 the stochastic field is assumed to be generated by innovations in space-time
with values $Y_{t}\left( {\bf x} \right)$ which are supposed to depend only on
innovations that occur prior to or at time $t$ and in general only on a restricted set of the corresponding part of space-time. I.e., at each
point $\left({\bf x}, t\right)$,  the value of $Y_{t}\left( {\bf x}\right)$  is only determined 
by innovations in some subset $A_{t}\left(
{\bf x}\right) $\ of $\mathcal{X}\times \mathbb{R}_{t}$\ (where $\mathbb{R}%
_{t}=(-\infty ,t]$), which we call the \emph{ambit set} associated to $\left({\bf x}, t\right)$. 
Furthermore,  
 we  refer to $Y$ and $X$\ as an \emph{ambit
field}\ and an \emph{ambit process}, respectively. 


 In order to use such general ambit fields in applications, we have to impose some structural assumptions. More precisely, we will define
 $Y_{t}\left( {\bf x}\right) $
as a stochastic integral plus a drift term,
where the integrand in the stochastic integral will consist of a deterministic
kernel times a positive random variate which is taken to express the \emph{%
volatility} of the field $Y$. 
More precisely,  we think of ambit fields as being defined as follows.
\begin{defi} Using the notation introduced above, an \emph{ambit field} is defined as a random field of the form
\begin{align}\label{ambfi}
Y_{t}\left( {\bf x}\right) =\mu +\int_{A_{t}\left( {\bf x}\right) }h\left({\bf x}, t; {\boldsymbol \xi}, s\right) \sigma _{s}\left( {\boldsymbol \xi} \right) L\left(  \mathrm{d}{\boldsymbol \xi}, \mathrm{d}s \right) 
 +\int_{D_{t}\left( {\bf x}\right) }q\left( {\bf x},t; {\boldsymbol \xi} ,s\right) a_{s}\left( {\boldsymbol \xi}
\right)   \mathrm{d}{\boldsymbol \xi} \mathrm{d}s,  
\end{align}
provided the integrals exist,
where $A_{t}\left( {\bf x}\right) $, and $D_{t}\left({\bf  x}\right) $ are ambit sets, $%
h $\ and $q$\ are deterministic functions, $\sigma \geq 0$ is a stochastic
field referred to as  \emph{volatility} or \emph{intermittency}, $a$ is also a stochastic field, and $L$\
is a \emph{L\'{e}vy basis}.
\begin{rem}
Note that compared to the base model \eqref{base} we introduced in the Introduction, the ambit field defined in \eqref{ambfi} also comes with a drift term and stochastic volatility introduced in form of a stochastic integrand. In Section \ref{Section:SpecVol}, we will describe in detail how such a stochastic volatility field $\sigma$ can be specified and what kind of complementary routes can be taken in order to allow for stochastic volatility clustering.
\end{rem}
The corresponding ambit process $X$ along the curve $\varpi$ is then given by
\begin{align}\label{ambitprocess}
X_{\theta} = \mu + \int_{A(\theta)}h(t(\theta); {\boldsymbol \xi},s) \sigma_s({\boldsymbol \xi}) L(d{\boldsymbol \xi},ds) + 
\int_{D(\theta)} q(t(\theta); {\boldsymbol \xi},s) a_s({\boldsymbol \xi})d{\boldsymbol \xi} ds,
\end{align}
where $A(\theta) = A_{t(\theta)}({\bf x}(\theta))$ and 
$D(\theta) = D_{t(\theta)}({\bf x}(\theta))$.
\end{defi}
In Section \ref{IntAmbitFields}, we will formulate the suitable integrability conditions which guarantee the existence of the integrals above.

Of particular interest in many applications are ambit processes that are stationary in time and
nonanticipative and homogeneous in space. More specifically, they may be derived from ambit fields $Y$
of the form%
\begin{align}
Y_{t}\left({\bf x}\right) &=\mu +\int_{A_{t}\left({\bf x}\right) }h\left( {\bf x}-{\boldsymbol \xi}, t-s\right) \sigma_{s}\left( {\boldsymbol \xi} \right) L\left( \mathrm{d}{\boldsymbol \xi} ,\mathrm{d}%
s\right)  +\int_{D_{t}\left({\bf x}\right) }q\left( {\bf x}-{\boldsymbol \xi}, t-s\right) a_{s}\left( {\boldsymbol \xi}
\right) \mathrm{d}{\boldsymbol \xi} \mathrm{d}s.  \label{ambfistat}
\end{align}%
Here the ambit sets $A_{t}\left( {\bf x}\right) $ and $D_{t}\left( {\bf x}\right) $ are
taken to be \emph{homogeneous} and \emph{nonanticipative},  i.e. $A_{t}\left(
{\bf x}\right) $\ is of the form $A_{t}\left( {\bf x}\right) =A+\left( {\bf x},t\right) $\
where $A$\ only involves negative time coordinates, and similarly for $%
D_{t}\left( {\bf x}\right) $. In addition, $\sigma$ and $a$ are chosen to be stationary in time and space and $L$ to be  homogeneous.



\subsection{Integration for general ambit fields}\label{IntAmbitFields}
Ambit fields have initially been  defined for deterministic integrands using the \cite{RajRos89} integration concept. Their definition could then be extended  to allow for stochastic integrands which are independent of the \Levy basis by a conditioning argument. 
As discussed before, 
 the integration framework developed by \cite{W}  has the advantage that it allows 
for stochastic integrands which are potentially dependent of the \Levy basis and enables us 
to study dynamic properties (such as martingale properties).
 Let us explain in more detail  how the \cite{W} integration concept can be used to define ambit fields using an It\^o-type integration concept.


One concern regarding the applicability of the \cite{W} framework to ambit fields might be that 
  general ambit sets $A_t(x)$ are not necessarily bounded, and we have only presented the \cite{W} concept for a bounded domain. 
However, the stochastic integration concept reviewed above can be extended to unbounded ambit sets using   
standard arguments, cf.\ 
\citet[p.~289]{W}.
Also, as pointed out in \citet[p.~292]{W}, it is possible to extend the \cite{W} integration concept  beyond the $L^2$-framework, cf.~\citet[p.~292]{W}.

Note that the classical \cite{W} framework works under the zero mean assumption, which might not be satisfied for general ambit fields. However, we can always define a new \Levy basis $\overline L$ by setting
$\overline L := L -\mathbb{E}(L)$,
which clearly has zero mean. Then we can define the \cite{W} integral w.r.t.~$\overline L$, and we obtain an additional drift term which needs to satisfy an additional integrability condition.

However, the main point we need to address is the fact that the integrand in  the ambit field does not seem to comply with the structure of the integrand in the Walsh-theory.
More precisely, for
  ambit fields with ambit sets $A_t({\bf x}) \subset \R^d \times (-\infty,t]$, we would like to define Walsh-type integrals for integrands of the form 
\begin{align}\label{Walshintegrand}
\zeta({\boldsymbol \xi},s):= \zeta({\bf x}, t; {\boldsymbol \xi},s) :=  \indicator_{A_t({\bf x})}({\boldsymbol \xi},s) h({\bf x}, t; {\boldsymbol \xi}, s) \sigma_s({\boldsymbol \xi}).
\end{align}
 The original Walsh's integration theory covers integrands which do not depend on the time index $t$. 
Clearly, the integrand given in (\ref{Walshintegrand}) generally exhibits $t$-dependence due to the choice of the ambit set $A_t({\bf x})$ and due to the deterministic kernel function $h$.

Suppose we are in the  simple case where  the ambit set can be represented as $A_t({\bf x}) = B \times (-\infty,t]$, where $B\in\mathcal{B}(\R^d)$ does not depend on $t$, and where the kernel function does not depend on $t$, i.e. $h({\bf x}, t; {\boldsymbol \xi}, s) = h({\bf x}; {\boldsymbol \xi}, s)$. Then  the Walsh-theory is directly applicable, and provided the integrand is indeed Walsh-integrable, then  (for fixed $B$ and fixed ${\bf x}$) the process
$$\left(\int_{-\infty}^t \int_{B} h({\bf x}; {\boldsymbol \xi}, s)\sigma_s({\boldsymbol \xi}) L(d{\boldsymbol \xi}, ds)\right)_{t\in \R}$$ is a martingale.

Note that the $t$-dependence (and also the additional ${\bf x}$-dependence) for general integrands in the ambit field is in  the deterministic part of the integrand only, i.e.~in $\indicator_{A_t({\bf x})}({\boldsymbol \xi},s) h({\bf x}, t; {\boldsymbol \xi}, s)$.
Now in order to allow for time $t$- (and ${\bf x}$-) dependence in the integrand, we can define the integrals in the Walsh sense for any \emph{fixed} $t$ and for \emph{fixed} ${\bf x}$.
Note that we treat ${\bf x}$ as an additional parameter which does not have an influence on the structural properties of the integral as a stochastic process in $t$.

It is clear that  in the case of having $t$-dependence in the integrand, the resulting stochastic integral is, in general, not a martingale measure any more.
However,  the properties of adaptedness, square-integrability and countable additivity carry over to the process
\begin{align*}
\left(\int_{-\infty}^t \int_{\R^d} \zeta({\bf x}, t; {\boldsymbol \xi}, s) L(d{\boldsymbol \xi}, ds)\right)_{t\in \R}
\end{align*}
(for fixed ${\bf x}$) 
since it is the $L^2$-limit of a stochastic process with the above mentioned properties.

 In order to ensure that the ambit fields (as defined in (\ref{ambfi})) are well-defined (in the Walsh-sense), throughout the rest of the paper we will work under the following assumption: 
 \begin{as}\label{as:Walsh-intcond}
Let $L$ denote a \Levy basis on
$S \times (-\infty, T]$, where $S$ denotes a not necessarily bounded Borel set $S$ in $\mathcal{X}=\mathbb{R}^d$ for some $d \in \N$. Define the new \Levy basis $\overline L := L -\mathbb{E}(L)$.
We extend the definition of the covariance measure  $Q$ of $\overline L$, see (\ref{CovMeasureQ}),  to an unbounded domain  and,  next, we define a Hilbert space $\mathcal{P}_{\overline L}$ with norm $||\cdot||_{\overline L}$ as in (\ref{norm}) (extended to an unbounded domain) and, hence, we have an \Ito isometry of type (\ref{ItoIs}) extended to an unbounded domain.
We assume that, for fixed ${\bf x}$ and  $t$,
\begin{align*}
\zeta({\boldsymbol \xi},s) =  \indicator_{A_t({\bf x})}({\boldsymbol \xi},s) h({\bf x}, t; {\boldsymbol \xi}, s) \sigma_s({\boldsymbol \xi})
\end{align*}
satisfies
\begin{enumerate}
\item $\zeta\in \mathcal{P}_{\overline L}$,
\item $||\zeta||_{\overline L^2} = \E\left[ \int_{\mathbb{R}\times \mathcal{X}}\zeta^2({\boldsymbol \xi}, s)Q(d{\boldsymbol \xi},ds)\right] < \infty$.
\item  $\int_{\mathbb{R}\times \mathcal{X}}|\zeta({\boldsymbol \xi}, s)|\mathbb{E}L(d{\boldsymbol \xi},ds) < \infty$
\end{enumerate}
 
 \end{as}

\begin{rem}
Note that alternatively, we could work with  the c\`{a}dl\`{a}g elementary random fields 
\begin{align*}
\zeta^*({\boldsymbol \xi}, s, \omega):=X(\omega)\indicator_{[a, b)}(s)\indicator_A({\boldsymbol \xi}),
\end{align*}
where $X(\omega)$ is assumed to be $\mathcal{F}_a$-adapted and the remaining notation is as in (\ref{elrv}).
Next, one can construct a $\sigma$-algebra from  the corresponding simple random fields and one would then define  the stochastic integral for $\zeta^*({\boldsymbol \xi}, s-, \omega)$, since clearly adaptedness and the
 c\`{a}dl\`{a}g property of $\zeta^*({\boldsymbol \xi}, s, \omega)$ implies predictability of $\zeta^*({\boldsymbol \xi}, s-, \omega)$.
\end{rem}


\subsection{Cumulant function for stochastic integrals w.r.t.~a \Levy basis}
Next we study some of the fundamental properties of ambit fields. 
Throughout this subsection, we work under the following assumption:
\begin{as}\label{as:indep}
The stochastic fields $\sigma$ and $a$ are independent of the \Levy basis $L$.
\end{as}
Now we have all the tools at hand which are  needed to compute the conditional characteristic function of ambit fields defined in (\ref{ambfi}) where $\sigma$ and $L$ are assumed independent and where we condition on the $\sigma$-algebra $\mathcal{F}^{\sigma}=\mathcal{F}_t^{\sigma}({\bf x})$ which is generated by the history of $\sigma$, i.e.
\begin{align*}
\mathcal{F}^{\sigma}_t({\bf x}) = \sigma\{\sigma_s({\boldsymbol \xi}): ({\boldsymbol \xi}, s)\in A_t({\bf x})\}.
\end{align*}
 \begin{prop} \label{prop:cumulant-ambit}
Assume that Assumption \ref{as:indep} holds.
Let
$C^{\sigma}$
 denote the conditional cumulant function 
when we condition on the volatility field $\sigma$. The conditional cumulant function of the ambit field defined by (\ref{ambfi})
is given by 
 \begin{align}\begin{split}\label{CharFct}
&C^{\sigma}\left\{\theta\ddagger \int_{A_t({\bf x})} h({\bf x},t; {\boldsymbol \xi},s)\sigma_s({\boldsymbol \xi})L(d{\boldsymbol \xi},ds)\right\} \\
&= 
\log\left(\E\left.\left(\exp\left(i \theta \int_{A_t({\bf x})}h({\bf x},t; {\boldsymbol \xi},s)\sigma_s({\boldsymbol \xi})L(d{\boldsymbol \xi},ds)\right)\right|\mathcal{F}^{\sigma}\right)\right)
 \\
&= \int_{A_t({\bf x})}C\left\{ \theta h({\bf x},t;{\boldsymbol \xi},s)\sigma_s({\boldsymbol \xi})\ddagger {L'({\boldsymbol \xi}, s)}\right \}c(d{\boldsymbol \xi}, ds),
 \end{split}\end{align}
 where $L'$ denotes the \Levy seed 
and
 $c$ is the control measure   associated with the \Levy basis $L$, cf.\ (\ref{CharL'}) and (\ref{ControlM}).
 \end{prop}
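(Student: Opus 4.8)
The plan is to condition on the volatility field and thereby reduce the computation to the cumulant function of a stochastic integral of a \emph{deterministic} integrand with respect to the \Levy basis $L$, for which the infinitesimal \levy-Khintchine representation \eqref{LevKhii} can be exploited directly. The two workhorses are Assumption \ref{as:indep} (which decouples $\sigma$ from $L$) and the building block \eqref{LevKhii}, namely $C\{\zeta \ddagger L(d{\bf z})\} = C\{\zeta \ddagger L'({\bf z})\}\, c(d{\bf z})$.

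First I would invoke Assumption \ref{as:indep}: since $\sigma$ is independent of $L$, conditioning on $\mathcal{F}^\sigma$ freezes the integrand. Writing $f({\boldsymbol \xi}, s) := \indicator_{A_t({\bf x})}({\boldsymbol \xi}, s)\, h({\bf x}, t; {\boldsymbol \xi}, s)\, \sigma_s({\boldsymbol \xi})$, independence implies that the conditional law of $\int_{A_t({\bf x})} h\,\sigma\, dL$ given $\mathcal{F}^\sigma$ coincides, for $P$-almost every realisation of $\sigma$, with the law of the integral $\int f\, dL$ of the now deterministic function $f$. The integrability conditions already assumed (see \eqref{RajRosIntCond} and Assumption \ref{as:Walsh-intcond}) guarantee that this deterministic integral exists for almost every realisation, so that the conditional cumulant function $C^\sigma$ is well defined.

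Next I would establish the deterministic formula
$$C\left\{\theta \ddagger \int f\, dL\right\} = \int C\{\theta f({\bf z}) \ddagger L'({\bf z})\}\, c(d{\bf z})$$
in two steps. For a simple function $f = \sum_{j=1}^n x_j \indicator_{A_j}$ with disjoint $A_j \in \mathcal{B}_b(S)$ as in Definition \ref{RajRosSimpleInt}, the integral equals $\sum_j x_j L(A_j)$; the independently scattered property of $L$ makes the summands independent, so the cumulant function of the sum is the sum of the individual cumulant functions. Applying \eqref{LevKhii} to each term, $C\{\theta x_j \ddagger L(A_j)\} = \int_{A_j} C\{\theta x_j \ddagger L'({\bf z})\}\, c(d{\bf z})$, and using that $f \equiv x_j$ on $A_j$ reassembles the right-hand side into the claimed integral, with $L'$ the \Levy seed of \eqref{CharL'}. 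For general integrable $f$ one approximates by simple functions $f_n \to f$ with $\int f_n\, dL \to \int f\, dL$ in probability (as in the construction of the \cite{RajRos89} integral), passes to cumulant functions, and lets $n \to \infty$. Substituting the frozen realisation $f = \indicator_{A_t({\bf x})} h\, \sigma$ back into this identity yields \eqref{CharFct}.

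The main obstacle I anticipate is the limiting step: one must justify interchanging the limit with the integral $\int C\{\cdot \ddagger L'({\bf z})\}\, c(d{\bf z})$. Convergence in probability of $\int f_n\, dL$ yields convergence of the cumulant functions on the left, while on the right a dominated-convergence argument controlled by the conditions in \eqref{RajRosIntCond} (through the functions $V_1, V_2$ and the Gaussian weight $b$) secures convergence of the integrals; this is precisely the content underlying \citet[Theorem 2.7]{RajRos89}. A secondary, more technical point is the joint measurability and almost-sure $L$-integrability of the frozen integrand $f$ across realisations of $\sigma$, which must be checked to make the conditioning in the first step rigorous, but this again follows from Assumption \ref{as:Walsh-intcond}.
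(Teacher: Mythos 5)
Your proof is correct and follows essentially the same route as the paper: condition on $\mathcal{F}^{\sigma}$ via Assumption \ref{as:indep} to freeze the integrand, then apply the cumulant formula for deterministic integrands with respect to a \Levy basis. The only difference is that the paper obtains this formula directly by citing \citet[Proposition 2.6]{RajRos89}, whereas you reprove it from simple functions and a limiting argument; the technical points you flag (dominated convergence on the right-hand side and measurability of the frozen integrand) are precisely what that cited result takes care of.
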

 \begin{proof}
 The proof of the Proposition is an immediate consequence of 
  \citet[Proposition 2.6]{RajRos89}.
\end{proof}
 \begin{cor}
In the case where $L$ is a homogeneous \Levy basis, equation (\ref{CharFct}) simplifies to 
 \begin{align*}
C^{\sigma}\left\{\theta \ddagger \int_{A_t({\bf x})} h({\bf x},t; {\boldsymbol \xi},s)\sigma_s({\boldsymbol \xi})L(d{\boldsymbol \xi},ds)\right\}
= \int_{A_t({\bf x})}C\left\{ \theta h({\bf x},t;{\boldsymbol \xi},s)\sigma_s({\boldsymbol \xi})\ddagger {L'}\right \}d{\boldsymbol \xi} ds.
 \end{align*}
\end{cor}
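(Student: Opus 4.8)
The plan is to specialise the general cumulant formula (\ref{CharFct}) of Proposition \ref{prop:cumulant-ambit} by inserting the two defining features of a homogeneous \Levy basis. Recall that $L$ is homogeneous precisely when it is factorisable with $a({\bf z})\equiv a$, $b({\bf z})\equiv b$ and $\nu(dx,{\bf z})\equiv\nu(dx)$ all independent of the location ${\bf z}$, and when its control measure is proportional to Lebesgue measure; under the normalisation $v=1$ adopted throughout the paper this means $c(d{\bf z})=d{\bf z}$. Since (\ref{CharFct}) was already established (via \citet[Proposition 2.6]{RajRos89}) under Assumption \ref{as:Walsh-intcond}, no new integrability input is required and the argument is purely a substitution.

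The one substantive point to check is that homogeneity forces the \Levy seed to be location-independent. By definition (\ref{CharL'}), the cumulant function $C\{\zeta\ddagger L'({\bf z})\}$ of the seed at ${\bf z}$ is assembled solely from $a({\bf z})$, $b({\bf z})$ and $\nu(dx,{\bf z})$; for a homogeneous basis each of these is constant in ${\bf z}$, so the right-hand side of (\ref{CharL'}) does not depend on ${\bf z}$. As a \levy-Khintchine representation determines an infinitely divisible law uniquely, the seed $L'({\bf z})$ coincides in law with a single random variable $L'$ for every ${\bf z}\in S$. Consequently the integrand $C\{\theta h({\bf x},t;{\boldsymbol \xi},s)\sigma_s({\boldsymbol \xi})\ddagger L'({\boldsymbol \xi},s)\}$ in (\ref{CharFct}) may be replaced by $C\{\theta h({\bf x},t;{\boldsymbol \xi},s)\sigma_s({\boldsymbol \xi})\ddagger L'\}$.

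Finally I would substitute $c(d{\boldsymbol \xi},ds)=d{\boldsymbol \xi}\,ds$ together with this location-independent seed into (\ref{CharFct}), which yields exactly the asserted identity. The only place where any care is needed is the claim that equality of cumulant functions across all ${\bf z}$ delivers a common seed $L'$; but this is immediate from the way the seed is defined in (\ref{CharL'}), so the corollary follows at once from Proposition \ref{prop:cumulant-ambit}.
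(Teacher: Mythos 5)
Your argument is correct and matches the paper's intent exactly: the paper states the corollary without explicit proof as an immediate specialisation of Proposition \ref{prop:cumulant-ambit}, obtained precisely by noting that homogeneity makes the \Levy seed location-independent and replaces the control measure by Lebesgue measure (with the normalisation $v=1$). Your verification that the seed's law is constant in ${\bf z}$ via (\ref{CharL'}) is the right justification for the substitution.
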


\subsection{Second order structure of ambit fields} \label{SecondOrderStr}
Next we study the second order structure of ambit fields.
Throughout the Section, let
\begin{align}\label{AmbitFieldWithoutDrift}
Y_t({\bf x}) = \int_{A_t({\bf x})}h({\bf x},t;{\boldsymbol \xi},s)\sigma_s({\boldsymbol \xi})L(d{\boldsymbol \xi}, ds),
\end{align}
where $\sigma$ is independent of $L$, i.e.~Assumption \ref{as:indep} holds, and $L'$ is the \Levy seed associated with $L$.

 \begin{prop}\label{CorStructureAmbit}
Let $t, \tilde t, {\bf x},{\bf \tilde x} \geq 0$ and let $Y_t({\bf x})$ be an ambit field as defined in (\ref{AmbitFieldWithoutDrift}).
The second order structure is then as follows. The means are given by
\begin{align*}
\mathbb{E}\left.\left(Y_t({\bf x})\right|\mathcal{F}^{\sigma}\right)
&= 
 \int_{A_t({\bf x})}h({\bf x}, t; {\boldsymbol \xi}, s)\sigma_s({\boldsymbol \xi})\E(L'({\boldsymbol \xi}, s))c(d{\boldsymbol \xi}, ds),\\
\mathbb{E}\left(Y_t({\bf x})\right)
& = 
 \int_{A_t({\bf x})}h({\bf x}, t; {\boldsymbol \xi}, s)\mathbb{E}\left(\sigma_s({\boldsymbol \xi})\right)\E(L'({\boldsymbol \xi}, s)) c(d{\boldsymbol \xi}, ds).
\end{align*}
The variances are given by
\begin{align*}
Var\left.\left(Y_t({\bf x})\right|\mathcal{F}^{\sigma}\right) 
&= 
\int_{A_t({\bf x})}h^2({\bf x}, t; {\boldsymbol \xi},s)\sigma_s^2({\boldsymbol \xi})Var(L'({\boldsymbol \xi}, s))c(d{\boldsymbol \xi}, ds),\\
 Var\left(Y_t({\bf x})\right) 
 &= 
 \int_{A_t({\bf x})}h^2({\bf x}, t; {\boldsymbol \xi},s)\mathbb{E}\left(\sigma_s^2({\boldsymbol \xi})\right)Var(L'({\boldsymbol \xi}, s))c(d{\boldsymbol \xi}, ds)\\
 &\hspace{-1.5cm}+  \int_{A_t({\bf x})}\int_{A_t({\bf x})}h({\bf x}, t; {\boldsymbol \xi},s)h({\bf x}, t; \tilde {\boldsymbol  \xi},\tilde s)
 \rho(s,\tilde s, {\boldsymbol \xi}, \tilde {\boldsymbol \xi})\E(L'({\boldsymbol \xi}, s))
 \E(L'(\tilde {\boldsymbol \xi}, \tilde s))
 c( d{\boldsymbol \xi}, d s) c( d\tilde {\boldsymbol \xi}, d \tilde s),
\end{align*}
where $\rho(s,\tilde s, {\boldsymbol \xi}, \tilde {\boldsymbol \xi}) = 
\mathbb{E}\left(\sigma_s({\boldsymbol \xi})\sigma_{\tilde s}(\tilde {\boldsymbol \xi})\right)
-
\mathbb{E}\left(\sigma_s({\boldsymbol \xi})\right)
\mathbb{E}\left(\sigma_{\tilde s}(\tilde {\boldsymbol \xi})\right)$.
The covariances are given by
\begin{align*}
Cov\left.\left(Y_t({\bf x}), Y_{\tilde t}(\tilde {\bf x})\right| \mathcal{F}^{\sigma} \right) &=
\int_{A_t({\bf x})\cap A_{\tilde t}(\tilde {\bf x})}h({\bf x}, t; {\boldsymbol \xi},s)h(\tilde {\bf x}, \tilde t; {\boldsymbol \xi},s)\sigma_s^2({\boldsymbol \xi}) Var(L'({\boldsymbol \xi}, s))c(d{\boldsymbol \xi}, ds),\\
Cov\left(Y_t({\bf x}), Y_{\tilde t}(\tilde {\bf x}) \right) &=
 \int_{A_t({\bf x})\cap A_{\tilde t}(\tilde {\bf x})}h({\bf x}, t; {\boldsymbol \xi},s)h(\tilde {\bf x}, \tilde t; {\boldsymbol \xi},s)\mathbb{E}\left(\sigma_s^2({\boldsymbol \xi})\right) Var(L'({\boldsymbol \xi}, s))c(d{\boldsymbol \xi}, ds)
\\
&\hspace{-2cm}+ \int_{A_t({\bf x})}\int_{A_{\tilde t}(\tilde {\bf  x})} 
h({\bf x}, t; {\boldsymbol \xi},s)h(\tilde {\bf x}, \tilde t; \tilde {\boldsymbol \xi},\tilde s)
\rho(s,\tilde s,{\boldsymbol \xi},\tilde {\boldsymbol \xi}) \E(L'({\boldsymbol \xi}, s))\E(L'(\tilde {\boldsymbol \xi}, \tilde s))c(d \tilde {\boldsymbol \xi}, d\tilde s)c(d{\boldsymbol \xi}, ds).
\end{align*}

\end{prop}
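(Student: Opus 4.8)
The plan is to read off the entire second-order structure from the conditional cumulant function supplied by Proposition~\ref{prop:cumulant-ambit}, since every quantity in the statement is a first or second cumulant (ordinary or mixed). Recall that for a random variable $Z$ with cumulant function $C\{\theta\ddagger Z\}=\log\mathbb{E}(e^{i\theta Z})$ one has $\mathbb{E}(Z)=-i\,\partial_\theta C\{\theta\ddagger Z\}|_{\theta=0}$ and $\mathrm{Var}(Z)=-\partial_\theta^2 C\{\theta\ddagger Z\}|_{\theta=0}$; applying the same relations to the \Levy seed gives $\mathbb{E}(L'({\boldsymbol\xi},s))=-i\,\partial_\zeta C\{\zeta\ddagger L'({\boldsymbol\xi},s)\}|_{\zeta=0}$ and $\mathrm{Var}(L'({\boldsymbol\xi},s))=-\partial_\zeta^2 C\{\zeta\ddagger L'({\boldsymbol\xi},s)\}|_{\zeta=0}$. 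I would first compute the conditional moments and then obtain the unconditional ones by the tower property together with the laws of total variance and total covariance, using Assumption~\ref{as:indep} to detach the $\sigma$-factors from the $L$-factors.

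For the conditional mean and variance I would differentiate the right-hand side of \eqref{CharFct} under the integral sign in $\theta$. Writing $\zeta=\theta\,h({\bf x},t;{\boldsymbol\xi},s)\,\sigma_s({\boldsymbol\xi})$ and using the chain rule, one derivative brings down a factor $h\sigma_s$ and evaluates $\partial_\zeta C\{\zeta\ddagger L'\}$ at $\zeta=0$, producing $\mathbb{E}(Y_t({\bf x})\mid\mathcal{F}^\sigma)=\int_{A_t({\bf x})}h\,\sigma_s\,\mathbb{E}(L'({\boldsymbol\xi},s))\,c(d{\boldsymbol\xi},ds)$; two derivatives bring down $h^2\sigma_s^2$ together with $\partial_\zeta^2 C\{\zeta\ddagger L'\}|_0=-\mathrm{Var}(L')$, giving the stated conditional variance. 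Taking $\mathbb{E}$ of the conditional mean and using independence to replace $\sigma_s$ by $\mathbb{E}(\sigma_s)$ inside the deterministic $c$-integral yields the unconditional mean. For the unconditional variance I would invoke $\mathrm{Var}(Y)=\mathbb{E}(\mathrm{Var}(Y\mid\mathcal{F}^\sigma))+\mathrm{Var}(\mathbb{E}(Y\mid\mathcal{F}^\sigma))$: the first summand replaces $\sigma_s^2$ by $\mathbb{E}(\sigma_s^2)$, while the second is the variance of the random $c$-integral $\int_{A_t({\bf x})}h\,\sigma_s\,\mathbb{E}(L')\,c(d{\boldsymbol\xi},ds)$, which expands into the stated double integral against $\rho(s,\tilde s,{\boldsymbol\xi},\tilde{\boldsymbol\xi})=\mathrm{Cov}(\sigma_s({\boldsymbol\xi}),\sigma_{\tilde s}(\tilde{\boldsymbol\xi}))$.

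The covariance is obtained by the same device applied to a joint cumulant. I would observe that $\theta Y_t({\bf x})+\tilde\theta Y_{\tilde t}(\tilde{\bf x})$ is again a single stochastic integral, now with integrand $\theta\,\indicator_{A_t({\bf x})}h({\bf x},t;\cdot)\sigma+\tilde\theta\,\indicator_{A_{\tilde t}(\tilde{\bf x})}h(\tilde{\bf x},\tilde t;\cdot)\sigma$, so that Proposition~\ref{prop:cumulant-ambit} (equivalently \citet[Proposition~2.6]{RajRos89}) applies verbatim. Forming the mixed second cumulant $-\partial_\theta\partial_{\tilde\theta}$ at the origin, the product of the two indicators collapses the domain to $A_t({\bf x})\cap A_{\tilde t}(\tilde{\bf x})$ — reflecting that $L$ is independently scattered, so disjoint regions of space--time contribute nothing — and delivers the stated conditional covariance, after which the law of total covariance produces the unconditional expression exactly as in the variance case. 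The main obstacle I anticipate is not conceptual but the justification of differentiating under the integral sign and of the interchanges of expectation and integration: these require finiteness of the relevant second moments, which is supplied by the square-integrability Assumption~(A1) together with finiteness of the second moments of $\sigma$, and they are carried out by dominated convergence and Fubini's theorem, the latter legitimate because $\sigma$ and $L$ are independent under Assumption~\ref{as:indep}.
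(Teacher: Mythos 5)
Your proposal is correct and follows essentially the same route as the paper: the authors likewise read the conditional moments off the conditional cumulant function \eqref{CharFct} and then pass to the unconditional quantities via the laws of total variance and covariance. The only difference is that you spell out the details (differentiation under the integral sign, the joint-cumulant argument that collapses the covariance domain to $A_t({\bf x})\cap A_{\tilde t}(\tilde{\bf x})$) which the paper leaves implicit.
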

\begin{proof}
From the conditional cumulant function \eqref{CharFct}, we can easily deduce the second order structure conditional on the stochastic volatility. Integrating over $\sigma$ and using the law of total variance and covariance leads to the corresponding unconditional results.  
\end{proof}
Note that it is straightforward to generalise the above results to allow for an additional drift term as in \eqref{ambfi}.

The second order structure provides us with some valuable insight into the autocorrelation structure of an ambit field. Knowledge of the autocorrelation structure can help us to study smoothness properties of an ambit field, as we do in the following section.
Also, from a more practical point of view, we could think of specifying a fully parametric model based on the ambit field. Then  the second order structure could  be used e.g.~in a (quasi-) maximum-likelihood set-up to estimate the model parameters.

\subsection{Smoothness conditions}
Let us study sufficient conditions which ensure smoothness of an ambit field.

\subsubsection{Some related results in the literature}
In the purely temporal (or null-spatial) case, which we will discuss in more detail in Section \ref{VMLV}, smoothness conditions for so-called Volterra processes have been studied before. In particular, 
 \cite{Decreusefond2002} shows that under mild integrability assumptions on a
progressively measurable  stochastic volatility process, the sample-paths of the volatility modulated Brownian-driven Volterra process
are a.s. H\"{o}lder-continuous even for some singular deterministic kernels. Note that \cite{Decreusefond2002} does not use the term \emph{stochastic volatility} in his article, but the stochastic integrand he considers could be regarded as a stochastic volatility process. 
Also,
 \cite{MytnikNeuman2011} study sample path properties of Volterra processes.

In the tempo-spatial context, or generally for random fields, smoothness conditions have been discussed in detail in the literature. For textbook treatments see e.g.~\cite{Adler1981},   \cite{AdlerTaylor2007} and \cite{AzaisWschebor2009}.
Important articles in this context include the following ones.
\cite{Kent1989} formulates sufficient conditions on the covariance function of a stationary real-valued random field which ensure sample path continuity.

 \cite{Rosinski1989} studies the relationship between the sample-path properties of an infinitely divisible integral process and the properties of the sections of the deterministic kernel. The study is carried out under the assumption of absence of a Gaussian component. In particular, he shows that various properties of the section are inherited by the paths of the process, which include boundedness, continuity, differentiability, integrability and boundedness of $p$th variation. 

\cite{MarcusRosinski2005} extend the previous results to derive sufficient conditions for boundedness and continuity for  stochastically continuous infinitely divisible processes, without Gaussian component.


\subsubsection{Sufficient condition on the covariance function}
In the following, we write
\begin{align*}
\rho(t,{\bf x}; \tilde t, \tilde {\bf x})= Cov\left(Y_t({\bf x}), Y_{\tilde t}(\tilde {\bf x}) \right),
\end{align*}
where the covariance is given as in Proposition \ref{CorStructureAmbit}.
We can apply the key results derived in \cite{Kent1989} to ambit fields.

Let $t, h_1 \in \R$ and ${\bf x}, {\bf h_2} \in \R^d$ and ${\bf h}:=(h_1, {\bf h_2})'$. For each $({\bf x},t)'$ we assume that $\rho(t+h_1,{\bf x}+{\bf h_2};  t-h_1, {\bf x}-{\bf h_2})$ is $k$-times continuously differentiable with respect to ${\bf h}$ for a $k \in \N_0$. We write
$p_k({\bf h}; t, {\bf x})$ for the polynomial of degree $k$ which is obtained from a Taylor expansion of $\rho(t+h_1,{\bf x}+{\bf h_2};  t-h_1, {\bf x}-{\bf h_2})$ about ${\bf h}={\bf 0}$ for each $({\bf x}, t)$. In the following, we denote by $||\cdot||$ the Euclidean norm.

\begin{prop}
For each $({\bf x},t)'$ suppose that $\rho(t+h_1,{\bf x}+{\bf h_2};  t-h_1, {\bf x}-{\bf h_2})$ is $d+1$-times continuously differentiable with respect to ${\bf h}$ and that there exists a constant $\gamma > 0$ such that 
\begin{align}\label{SmoothC1}
\sup_{({\bf x},t)} \left\{\left| \rho (t+h_1,{\bf x}+{\bf h_2};  t-h_1, {\bf x}-{\bf h_2})- p_{d+1}({\bf h}; t, {\bf x})\right| \right\} = O\left(\frac{||{\bf h}||^{d+1}}{|\log ||{\bf h}|||^{3 + \gamma}}\right), 
\end{align}
as $||{\bf h}|| \to 0$,
where the supremum is computed over $({\bf x},t)$ being in each compact subset of $\R^{d+1}$. 
Then there exists a version of the  random field $\{Y_t({\bf x}),({\bf x},t)\in \R^{d+1}\}$ which has almost surely continuous sample paths.
\end{prop}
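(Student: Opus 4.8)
The plan is to recognise the stated condition as a direct verification of the hypotheses of the sample-path continuity criterion of \cite{Kent1989}, which supplies sufficient conditions, phrased entirely in terms of the local behaviour of the covariance function, for a real-valued random field to admit a continuous modification. Since sample-path continuity is a local property, it suffices to construct a continuous version on each compact $K\subset\R^{d+1}$ and then patch along an exhausting sequence of compacts; this is exactly why the supremum in (\ref{SmoothC1}) ranges over compact subsets. I would fix such a $K$ and, replacing $Y$ by $Y-\E(Y)$, reduce to the centred field, the deterministic mean from Proposition \ref{CorStructureAmbit} being continuous under mild regularity of $h$, so that its continuity is handled separately and routinely.

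First I would record the requisite second-order structure. By Proposition \ref{CorStructureAmbit} the covariance $\rho(t,{\bf x};\tilde t,\tilde{\bf x})=Cov(Y_t({\bf x}),Y_{\tilde t}(\tilde{\bf x}))$ is finite and explicit, so $Y$ is square-integrable and the $L^2$-increment identity
\begin{equation*}
\E\left[\left(Y_{t+h_1}({\bf x}+{\bf h_2})-Y_{t-h_1}({\bf x}-{\bf h_2})\right)^2\right]=\rho_{++}+\rho_{--}-2\,\rho(t+h_1,{\bf x}+{\bf h_2};t-h_1,{\bf x}-{\bf h_2})
\end{equation*}
holds, where $\rho_{++}=Var(Y_{t+h_1}({\bf x}+{\bf h_2}))$ and $\rho_{--}=Var(Y_{t-h_1}({\bf x}-{\bf h_2}))$. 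The symmetric reparametrisation through ${\bf h}=(h_1,{\bf h_2})'$ places the two arguments symmetrically about $(t,{\bf x})$, so that ${\bf h}={\bf 0}$ corresponds to coincident points and the right-hand side vanishes there.

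Next I would feed the hypothesis (\ref{SmoothC1}) into this expression. The assumed $(d+1)$-fold continuous differentiability of the map ${\bf h}\mapsto\rho(t+h_1,{\bf x}+{\bf h_2};t-h_1,{\bf x}-{\bf h_2})$ guarantees that the Taylor polynomial $p_{d+1}({\bf h};t,{\bf x})$ is well defined, while the remainder bound in (\ref{SmoothC1}) controls, uniformly in $({\bf x},t)\in K$, the deviation of $\rho$ from $p_{d+1}$ by $O\left(||{\bf h}||^{d+1}/|\log ||{\bf h}|||^{3+\gamma}\right)$. This is precisely the modulus-of-smoothness input required by Kent's criterion in dimension $d+1$: the polynomial part supplies the smooth (differentiable) component of the increment, while the logarithmic gain of power $3+\gamma$ over the bare power $||{\bf h}||^{d+1}$ is exactly what closes the associated entropy/chaining estimate and upgrades $L^2$-continuity to almost-sure sample-path continuity. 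Invoking \cite{Kent1989} on $K$ and then patching yields the desired continuous version of $\{Y_t({\bf x})\}$.

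The step I expect to be the main obstacle is the bookkeeping needed to match (\ref{SmoothC1}) to the precise form of Kent's hypotheses. One must confirm that the symmetric parametrisation through ${\bf h}$ reproduces the local expansion Kent works with, that the differentiability order $d+1$ and the logarithmic exponent $3+\gamma$ are the correct thresholds his theorem demands for a field on $\R^{d+1}$, and that the explicit covariance of Proposition \ref{CorStructureAmbit} is genuinely $(d+1)$-times continuously differentiable -- a regularity that must be inherited from the kernel $h$ and from the second moments of the volatility field $\sigma$ before the Taylor expansion can even be formed. Once these identifications are secured, the conclusion is immediate from Kent's result.
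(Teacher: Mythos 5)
Your proposal is correct and follows essentially the same route as the paper: the paper's proof is simply a direct appeal to Theorem 1 and Remark 6 of \cite{Kent1989}, and your argument amounts to unpacking why the stated hypothesis is exactly Kent's condition for a field indexed by $\R^{d+1}$. The additional bookkeeping you describe (localisation to compacts, the symmetric parametrisation, the $L^2$-increment identity) is precisely the content of Kent's result and does not constitute a different method.
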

\begin{proof}
The result is a  direct consequence of \citet[Theorem 1 and Remark 6]{Kent1989}.
\end{proof}

\begin{rem}
As pointed out in 
\citet[Remark 3]{Kent1989},  \eqref{SmoothC1} could be replaced by the stronger conditions of the supremum being of order $O(||{\bf h}||^{d+1+\beta})$ as ${\bf h}\to {\bf 0}$ for some constant $\beta > 0$. This condition is easier to check in practice and if it holds for any $\beta > 0$, then it implies that \eqref{SmoothC1} holds for all $\gamma > 0$.
\end{rem}

\begin{rem}
As pointed out in 
\citet[Remark 5]{Kent1989} and \citet[p.~60]{Adler1981}, as soon as we have a Gaussian field, milder conditions ensure continuity. Note that an ambit field is Gaussian if the \Levy basis is Gaussian and  the stochastic volatility component is absent (or purely deterministic).
\end{rem}


\subsection{Semimartingale conditions}
Next we derive sufficient conditions which ensure that an ambit field is a semimartingale in time. This is interesting since in financial applications we typically want to stay within the semimartingale framework whereas in applications to turbulence one typically focuses on non-semimartingales.

We will  see that a sufficient  condition for a semimartingale  is linked to a smoothness condition on the kernel function. 
When studying the semimartingale condition, we focus on ambit sets which factorise as $A_t({\bf x}) = [0, t] \times A({\bf x})$, which is in line with the \cite{W}-framework.
We start with a preliminary Lemma.
\begin{lem}
Let $L$ be a \Levy basis satisfying (A1) and (A2) and $\sigma$ be a 
predictable stochastic volatility field which is integrable w.r.t.~$L$. Then 
\begin{align}\label{M}
M_t(A({\bf x}))=\int_0^t\int_{A({\bf x})}\sigma_s({\boldsymbol \xi})L(d{\boldsymbol \xi}, ds),
\end{align}
 is an orthogonal martingale measure.
\end{lem}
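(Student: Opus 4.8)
The plan is to verify directly the four defining properties of an orthogonal martingale measure in the sense of \cite{W}: for each fixed spatial set the process is a square-integrable martingale started at $0$; the set map is a $\sigma$-finite $L^2$-valued measure; and disjoint sets yield orthogonal martingales. Throughout I regard the spatial set as the argument of the measure, writing $M_t(B)=\int_0^t\int_{B}\sigma_s({\boldsymbol \xi})\,L(d{\boldsymbol \xi},ds)$ for $B\in\mathcal{B}_b(S)$; this is well defined because $\sigma$ is predictable and integrable w.r.t.\ $L$, so $\sigma\indicator_{B}\in\mathcal{P}_{L}$ and the \Ito isometry (\ref{ItoIs}) applies.

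First I would establish the martingale property. For an elementary integrand the integral is a martingale in $t$ by the very construction of the Walsh integral together with the martingale property of $(L_t(A))_t$ recorded after (\ref{filt}); simple integrands follow by linearity. For general predictable $\sigma$ I would pass to the $L^2$-limit: choosing simple $\sigma_n\to\sigma$ in $\|\cdot\|_L$, the isometry (\ref{ItoIs}) gives $M_t^{(n)}(B)\to M_t(B)$ in $L^2(\Omega)$ for each $t$, and an $L^2$-limit of $(\mathcal{F}_t)$-martingales is again an $(\mathcal{F}_t)$-martingale; square-integrability is immediate from (\ref{ItoIs}), and $M_0(B)=0$ a.s.\ since the integral over an empty time interval vanishes.

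Next I would verify the measure property. For pairwise disjoint $B_1,B_2,\dots\in\mathcal{B}_b(S)$ with $\bigcup_j B_j\in\mathcal{B}_b(S)$, linearity of the integral gives $M_t\big(\bigcup_{j=1}^N B_j\big)=\sum_{j=1}^N M_t(B_j)$, and the tail estimate $\big\|\sigma\indicator_{\bigcup_{j>N}B_j}\big\|_L^2=\E\int_{[0,t]\times\bigcup_{j>N}B_j}\sigma^2\,Q(d{\boldsymbol \xi},ds)\to 0$ — which follows from $\sigma\in\mathcal{P}_L$ by dominated convergence for the finite measure $\sigma^2\,Q$ — yields convergence of the partial sums in $L^2(\Omega)$; $\sigma$-finiteness is inherited from that of $Q$. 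This establishes countable additivity in the $L^2$ sense.

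The key step, and the one I expect to be the main obstacle, is orthogonality. For disjoint $A,B$ I would compute the covariation of $M(A)$ and $M(B)$ by polarising the isometry (\ref{ItoIs}): the covariance functional of the Walsh integral against the worthy measure $L$ (orthogonal martingale measures are worthy, \citet[Corollary 2.9]{W}) is governed by its covariance measure $Q$ from (\ref{CovMeasureQ}), so that the bracket of $M(A)$ and $M(B)$ is driven by $\int \sigma\indicator_A\,\sigma\indicator_B\,Q=\int_{A\cap B}\sigma^2\,Q$. Since $A\cap B=\emptyset$ this vanishes, whence $M_t(A)M_t(B)$ is a martingale and $M(A)\perp M(B)$. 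The care here is to justify that $Q$ creates no cross terms between disjoint spatial regions, which rests precisely on $L$ being an \emph{orthogonal} martingale measure — i.e.\ $L_t(A)$ and $L_t(B)$ independent for disjoint $A,B$, as noted after (\ref{filt}) — so that the bilinear covariation passes through the pointwise product of the integrands. Collecting the four properties gives that $M_t(A({\bf x}))$ in (\ref{M}) is an orthogonal martingale measure.
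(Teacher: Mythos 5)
Your proof is correct and follows essentially the argument behind the result the paper invokes: the paper itself gives no details, deferring entirely to \citet[p.~296, Theorem~2.5]{W}, and your verification (martingale property via elementary integrands and $L^2$-limits, countable additivity from the isometry, orthogonality from the diagonal support of the covariance measure $Q$) is precisely the content of that theorem. One point worth commending: you correctly verify orthogonality in the bracket sense, i.e.\ that $\langle M(A),M(B)\rangle_t=\int_{[0,t]\times(A\cap B)}\sigma^2\,dQ=0$ for disjoint $A,B$, rather than attempting independence of $M_t(A)$ and $M_t(B)$, which would fail in general because of the common stochastic integrand $\sigma$.
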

\begin{proof}
See \citet[p.~296]{W} for a proof of the lemma above.
\end{proof}

\begin{as}\label{as:smcond}
Assume that the deterministic function $u\mapsto h(\cdot,u, \cdot, \cdot)$ is differentiable (in the second component) and denote by $h'=\frac{\partial h}{\partial u}(\cdot,u, \cdot, \cdot)$  the derivative with respect to the second component. Then for  $s\leq t$ we have the representation 
\begin{align}\label{hAss}
h({\bf x}, t; {\boldsymbol \xi}, s) = h({\bf x}, s;  {\boldsymbol \xi}, s) + \int_s^t h'({\bf x}, u; {\boldsymbol \xi}, s) du,
\end{align}
provided that  $h({\bf x}, s; {\boldsymbol \xi}, s)$ exists.
Further, we assume that both components in the representation \eqref{hAss} are \cite{W}-integrable w.r.t.~$M$ and 
$\zeta_1({\boldsymbol \xi}, s)=h({\bf x}, s;  {\boldsymbol \xi}, s)$ and $\zeta_2({\boldsymbol \xi}, s)= \int_s^t h'({\bf x}, u; {\boldsymbol \xi}, s) du$ satisfy 
$\zeta_i \in \mathcal{P}_M$, $||\zeta_i||_{M^2}< \infty$, 
$\int_{R \times \mathcal{X}} |\zeta_i({\boldsymbol \xi},s)|\E M(d{\boldsymbol \xi},ds)< \infty$ for $i=1, 2$.
\end{as}

\begin{prop}
Let $M$ be defined as in \eqref{M} with covariance measure $Q_M$,  
 and assume that $h$ satisfies Assumption \ref{as:smcond} and 
\begin{align*}
\mathbb{E} \int_{S \times S \times [0,T] \times G}
|F(u,{\bf x}; s, {\boldsymbol \xi_1})
F(u,{\bf x}; s, {\boldsymbol\xi_2})| Q_M(d{\boldsymbol\xi_1}, d{\boldsymbol\xi_2}, ds) du < \infty,
\end{align*}
for
\begin{align*}
F(u, {\bf x}; s, {\boldsymbol\xi})=h'({\bf x}, u;{\boldsymbol\xi}, s)\mathbb{I}_{[s,t]}(u).
\end{align*}

Then $(\int_0^t \int_{A({\bf x})} h({\bf x}, t; {\boldsymbol\xi}, s) M(d{\boldsymbol\xi}, ds))_{t\geq 0}$ is a semimartingale with representation 
\begin{multline}\label{semimg}
\int_0^t \int_{A({\bf x})} h(t, {\bf x}; s, {\boldsymbol\xi}) M(d{\boldsymbol\xi}, ds)
\\
= \int_0^t \int_{A({\bf x})} h(s, {\bf x}; s, {\boldsymbol\xi}) M(d{\boldsymbol\xi}, ds)
+\int_0^t \int_0^u \int_{A({\bf x})}  h'(u, {\bf x}; s, {\boldsymbol\xi}) M(d{\boldsymbol\xi}, ds) du.
\end{multline}
\end{prop}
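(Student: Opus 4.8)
The plan is to substitute the representation \eqref{hAss} for the kernel into the stochastic integral and then to interchange the Lebesgue integral $\int_s^t \cdots\, du$ with the stochastic integral against $M$ by means of a stochastic Fubini theorem for (orthogonal) martingale measures. First I would use \eqref{hAss} and the additivity of the Walsh integral to split
\begin{align*}
\int_0^t \int_{A({\bf x})} h({\bf x}, t; {\boldsymbol \xi}, s)\, M(d{\boldsymbol \xi}, ds)
&= \int_0^t \int_{A({\bf x})} h({\bf x}, s; {\boldsymbol \xi}, s)\, M(d{\boldsymbol \xi}, ds) \\
&\quad + \int_0^t \int_{A({\bf x})} \left( \int_s^t h'({\bf x}, u; {\boldsymbol \xi}, s)\, du \right) M(d{\boldsymbol \xi}, ds),
\end{align*}
which is legitimate because both $\zeta_1$ and $\zeta_2$ are $M$-integrable by Assumption \ref{as:smcond}. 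The first term has the $t$-independent integrand $h({\bf x}, s; {\boldsymbol \xi}, s)$ (depending on $t$ only through the upper time limit), and since $M$ is an orthogonal martingale measure by the preceding Lemma, this term is, for fixed ${\bf x}$, a martingale in $t$, exactly as in the Walsh construction.

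The heart of the argument is the second term. Writing $\int_s^t h'({\bf x}, u; {\boldsymbol \xi}, s)\, du = \int_0^t F(u, {\bf x}; s, {\boldsymbol \xi})\, du$ with $F(u, {\bf x}; s, {\boldsymbol \xi}) = h'({\bf x}, u; {\boldsymbol \xi}, s)\, \indicator_{[s,t]}(u)$, I would apply a stochastic Fubini theorem (in the form available for worthy/orthogonal martingale measures, cf.~\cite{W}) to interchange $du$ and $M(d{\boldsymbol \xi}, ds)$:
\begin{align*}
\int_0^t \int_{A({\bf x})} \left( \int_0^t F(u, {\bf x}; s, {\boldsymbol \xi})\, du \right) M(d{\boldsymbol \xi}, ds)
= \int_0^t \left( \int_0^t \int_{A({\bf x})} F(u, {\bf x}; s, {\boldsymbol \xi})\, M(d{\boldsymbol \xi}, ds) \right) du.
\end{align*}
Since $\indicator_{[s,t]}(u) = \indicator_{\{s \leq u\}}$ for $s, u \in [0,t]$, the inner stochastic integral reduces to $\int_0^u \int_{A({\bf x})} h'({\bf x}, u; {\boldsymbol \xi}, s)\, M(d{\boldsymbol \xi}, ds)$, which yields precisely the drift term of \eqref{semimg}. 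As this drift is an absolutely continuous, hence finite-variation, process in $t$ (for fixed ${\bf x}$), the decomposition exhibits the left-hand side as a martingale plus a finite-variation process, so it is a semimartingale, completing the proof.

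The main obstacle is the rigorous justification of the stochastic Fubini interchange, and this is exactly where the integrability hypothesis
\[
\mathbb{E} \int_{S \times S \times [0,T] \times G} |F(u, {\bf x}; s, {\boldsymbol\xi_1}) F(u, {\bf x}; s, {\boldsymbol\xi_2})|\, Q_M(d{\boldsymbol\xi_1}, d{\boldsymbol\xi_2}, ds)\, du < \infty
\]
is used: it guarantees joint integrability of $F$ against the product of the covariance measure $Q_M$ and Lebesgue measure $du$, which is the condition ensuring that both iterated integrals exist, agree in $L^2(\Omega)$, and that the inner object defines a genuine (square-integrable) martingale measure in its stochastic slot. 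I would verify that this condition makes $(u,\omega) \mapsto \int_0^u \int_{A({\bf x})} h'({\bf x}, u; {\boldsymbol \xi}, s)\, M(d{\boldsymbol \xi}, ds)$ well-defined and jointly measurable, so that the outer $du$-integral can be formed and the interchange holds. Some care is also needed to confirm that $\zeta_2 \in \mathcal{P}_{M}$ and that the predictability structure in $(s,{\boldsymbol \xi})$ (with $u$ a parameter) is compatible with the hypotheses of the stochastic Fubini theorem for martingale measures.
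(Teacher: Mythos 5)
Your argument is correct and follows essentially the same route as the paper: decompose the kernel via \eqref{hAss}, identify the first term as a Walsh martingale measure integral, and justify the interchange of the $du$-integral with the $M(d{\boldsymbol\xi},ds)$-integral in the second term by the stochastic Fubini theorem for martingale measures (\citet[Theorem 2.6]{W}), with the displayed integrability hypothesis on $F$ against $Q_M \times du$ serving exactly that purpose. The paper's proof additionally spells out the $\mathcal{G}\times\mathcal{P}$-measurability of $F$ (taking $G=[0,T]$ with Lebesgue measure), which is the point you flag at the end as needing care.
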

Clearly, the first term in  representation \eqref{semimg} is a martingale measure in the sense of Walsh and the second term is a finite variation process.
\begin{proof}
The result follows from the stochastic  Fubini theorem for martingale measures, see \citet[Theorem 2.6]{W}.
In the following, we check that the conditions of the stochastic Fubini theorem are satisfied.
Let $(G, \mathcal{G}, \leb)$ denote a finite measure space. Concretely, take $G=[0,T]$ and  $\mathcal{G} =\mathcal{B}(G)$. 
Note that $u\mapsto h(\cdot, u, \cdot,\cdot)$ has finite first derivative at all points, hence its derivative is $\mathcal{G}$-measurable. Also, the indicator function is $\mathcal{G}$-measurable since the corresponding interval is an element of $\mathcal{G}$. Overall we have that the function
\begin{align*}
F(u, {\bf x}; s, {\boldsymbol\xi})=h'({\bf x}, u;{\boldsymbol\xi}, s)\mathbb{I}_{[s,t]}(u)
\end{align*}
is $\mathcal{G}\times \mathcal{P}$-measurable and 
\begin{align*}
\mathbb{E} \int_{S \times S \times [0,T] \times G}
|F(u,{\bf x}; s,{\boldsymbol \xi_1})
F(u,{\bf x}; s, {\boldsymbol\xi_2})| Q_M(d{\boldsymbol\xi_1}, d{\boldsymbol\xi_2}, ds) du < \infty,
\end{align*}
where $Q_M$ is the covariance measure of $M$.
Then
\begin{align*}
&\int_0^t \int_{A({\bf x})} h(t,{\bf x}; s, {\boldsymbol\xi}) M(d{\boldsymbol\xi}, ds)
\\
&= \int_0^t \int_{A({\bf x})} h(s, {\bf x}; s, {\boldsymbol\xi}) M(d{\boldsymbol\xi}, ds)
+ \int_0^t \int_{A({\bf x})} \int_s^t h'(u, {\bf x}; s, {\boldsymbol\xi})du M(d{\boldsymbol\xi}, ds)\\
&= \int_0^t \int_{A({\bf x})} h(s, {\bf x}; s, {\boldsymbol\xi}) M(d{\boldsymbol\xi}, ds)
+\int_0^t \int_0^u \int_{A({\bf x})}  h'(u, {\bf x}; s, {\boldsymbol\xi}) M(d{\boldsymbol\xi}, ds) du.
\end{align*}
\end{proof}

\subsection{The purely-temporal case: Volatility modulated Volterra processes}\label{VMLV}
The purely temporal, i.e.~the null-spatial, case of an ambit field has been studied in detail in recent years.  
Here we denote by
  $L=(L_t)_{t \in \mathbb{R}}$  a  L\'{e}vy process on $\mathbb{R}$. Then, the null--spatial ambit field is in fact a \emph{volatility modulated \levy-driven Volterra} ($\mathcal{VMLV}$) process  denoted by 
$Y=\left\{ Y_{t}\right\} _{t\in \mathbb{R}}$, where
\begin{equation}
Y_{t}=\mu +\int_{-\infty }^{t}k(t,s)\sigma _{s-}\mathrm{d}L_{s}+\int_{-\infty
}^{t}q(t,s)a_{s}\mathrm{d}s,  \label{VMV}
\end{equation}%
where $\mu $ is a constant, $k$ and $q$ are
are real--valued measurable function on $\R^2$, 
such that the  integrals above  exist
 with $k\left( t,s\right)=q\left( t,s\right) =0$ for $s>t$, and $\sigma$ and $a$ are c\`{a}dl\`{a}g  processes.

Of particular interest, are typically 
semi--stationary  processes, i.e.\ the case when  the kernel function depends on $t$ and $s$ only through the difference $t-s$.  This determines  the class of 
 L\'{e}vy semistationary processes ($\mathcal{LSS}$), see \cite{BNBV2010a}.
  Specifically, 
\begin{equation}
Y_{t}=\mu +\int_{-\infty }^{t}k(t-s)\sigma _{s-}\mathrm{d}Z_{s}+\int_{-\infty
}^{t}q(t-s)a_{s}\mathrm{d}s,  \label{LSS}
\end{equation}%
$k$ and $q$ are
non-negative deterministic functions on $\mathbb{R}$ with $k\left( x\right)=q\left(x\right) =0$ for $x\leq 0$.
Note that an $\mathcal{LSS}$ process is stationary as soon as $\sigma$ and $a$ are stationary processes. 
In the case that $L$ is a Brownian motion, we call $Y$ a \emph{Brownian semistationary} ($\mathcal{BSS}$) \emph{process}, see \cite{BNSch09}.

The class of $\mathcal{BSS}$ processes has been used by \cite{BNSch09} to model turbulence in physics. In that context, intermittency, which is modelled by $\sigma$, plays a key role, which has triggered detailed research on the question of how intermittency can be estimated non-parametrically. Recent research, see e.g.~\cite{BNCP2009a,BNCP09,BNCP10b}, has developed realised multipower variation and related concepts to tackle this important question. 

The class of $\mathcal{LSS}$ processes has subsequently been found to be suitable for modelling energy spot prices, see \cite{BNBV2010a,VV2}.
Moreover, \cite{BNBPV2012} have recently developed an anticipative  stochastic integration theory with respect to $\mathcal{VMLV}$ processes.

\section{Illustrative example: Trawl processes}\label{Section:Subclasses}
Ambit fields and processes constitute a very flexible class for modelling a variety of tempo-spatial phenomena. This Section will focus on one particular   sub-class of ambit processes, which has recently been used  in application in turbulence and finance.

\subsection{Definition and general properties}

 \emph{Trawl processes} are stochastic processes defined in terms of tempo-spatial \Levy bases. They  have  recently been introduced in \cite{BN2011} as a class of stationary infinitely divisible stochastic processes.
\begin{defi}\label{DefTrawlProc}
Let $L$ be a homogeneous \Levy basis on $\R^d \times \R$ for $d \in \N$. Then, using the same notation as before, 
\begin{align*}
\mathcal{B}_{b}(\R^d \times \R) := \{ A \in \mathcal{B}(\R^d \times \R): \leb(A) < \infty \}.
\end{align*}
 Further, for an $A=A_0 \in \mathcal{B}_{b}(\R^d \times \R)$, we define $A_t:= A +({\bf 0},t)$.
Then
\begin{align}\label{trawlproc}
Y_t = \int_{\R^d \times \R} 1_A({\boldsymbol\xi}, s-t)L(d{\boldsymbol\xi}, ds) =L(A_t),
\end{align}
defines the \emph{trawl process} associated with the \Levy basis $L$ and the \emph{trawl} $A$. 
\end{defi}
The assumption $\leb(A)< \infty$ in the definition above ensures the existence of the integral in (\ref{trawlproc}) as defined by \cite{RajRos89}. 
\begin{rem} 
 If $A \subset (-\infty, 0]\times \R^{d}$, then the trawl process belongs to the class of ambit processes.
\end{rem}

The intuition and also the name of the trawl process comes from the idea that 
we have a certain tempo-spatial set -- the trawl -- which is relevant for our object of interest. 
  I.e.~the object of interest at time $t$ is modelled as the \Levy basis evaluated over the trawl $A_t$. As time $t$ progresses,
we pull along the trawl (like a fishing net) and hence obtain a stochastic process (in time $t$). For the time being, we have in mind that  the shape of the trawl does not change as time progresses, i.e.~that the process is stationary. This assumption can be relaxed as we will discuss in Section \ref{Section:SpecVol}.

\begin{ex}
Let 
 $d=1$ and suppose 
that the trawl is given by $A_t =\{(x, s): s\leq t, 0 \leq x \leq \exp(-0.7(t-s))\}$. 
Figure \ref{Fig-Trawl} illustrates the basic framework for such a process. It depicts the trawl at different times $t \in \{2, 5\}$. The value of the process at time $t$ is then determined by evaluating the corresponding \Levy basis over the trawl $A_t$.


\begin{figure}[htb]
\centering
\includegraphics[trim=4cm 20cm 3cm 3.5cm, clip=true, totalheight=0.3\textheight]{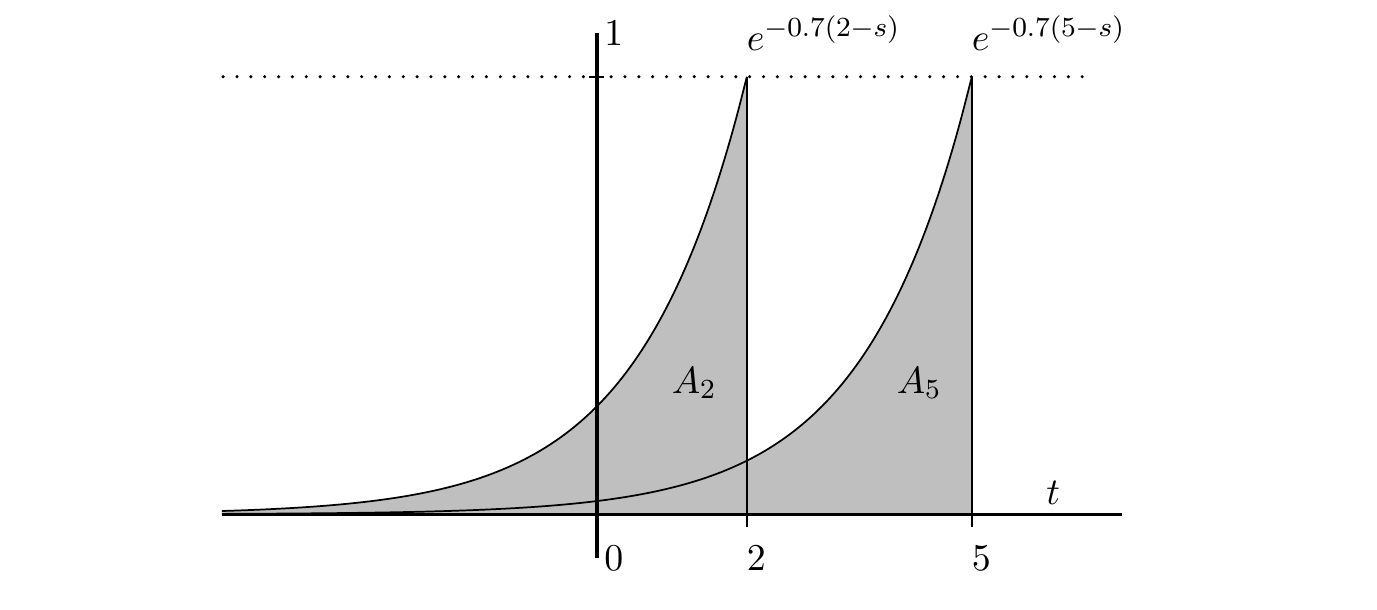}
\caption{Example of a relevant choice of the trawl:
$A_t =\{(x, s): s\leq t, 0 \leq x \leq \exp(-0.7(t-s))\}$. The shape of the trawl does not change as time $t$ progresses and, hence, we obtain a stationary process. The value of the process is obtained by evaluating $L(A_t)$ for each $t$.\label{Fig-Trawl}}
\end{figure}

\end{ex}

\subsection{Cumulants and correlation structure}\label{Cum}
From \cite{BN2011} we know that the trawl process defined above is a strictly stationary stochastic process, and 
 we can easily derive the cumulant transform of a  trawl process,
which is given by
\begin{align}\label{CumTrawl}
C(\zeta \ddagger Y_t):= \E(\exp(i \zeta Y_t))
= \leb(A) C(\zeta \ddagger L') :=\leb(A) \E(\exp(i \zeta L')).
\end{align}	
From the equation (\ref{CumTrawl}), we see immediately that the law of the trawl process is infinitely divisible since the corresponding \Levy seed has infinitely divisible law.

\begin{rem} 
 To any infinitely divisible law  $\pi$ there exists a stationary trawl process having $\pi$ as its one-dimensional marginal law. This follows from formula \eqref{CumTrawl}, cf.~\cite{BN2011}
\end{rem}
We can now easily derive the cumulants of the trawl process which are given by
$\kappa_i(Y_t)=\leb(A) \kappa_i(L')$, for $i \in \N$, 
 provided they exist.
In particular, the mean and variance  are given by
\begin{align*}
\E(Y_t) = \leb(A)\  \E(L'), &&
Var(Y_t)=  \leb(A)\ Var(L').
\end{align*}


Marginally, we see that the precise shape of the trawl does not have any impact on the distribution of the process. The quantity which matters here is the \emph{size}, i.e.~the Lebesgue measure,  of the trawl. So two different specifications of the ambit set, which have the same size, are not identified based on  the marginal distribution only.

However, we will see that the shape of the trawl determines the autocorrelation function. More precisely, the autocorrelation structure is given as follows. Let $h > 0$, then 
\begin{align}\label{TrawlAcf}
\rho(h):= Cov(Y_t, Y_{t+h}) =  \leb(A \cap A_h) Var(L').
\end{align}
For the autocorrelation, we get
\begin{align*}
r(h) = Cor(Y_t, Y_{t+h})=  \frac{\leb(A \cap A_h)}{\leb(A)}.
\end{align*}

\subsection{\levy-It\^{o} decomposition for trawl processes}
In the following we study some of the sample path properties of trawl processes.
First of all, we study a representation result for trawl process $Y$, where we split the process into a drift part, a Gaussian part and a jump part in a similar fashion as in the classical L\'{e}vy-It\^{o} decomposition.
Recall that $L$ is a homogeneous \Levy basis on $\R^d\times \R$. 
In the following, $s$ denotes the one dimensional temporal variable and ${\boldsymbol \xi}$ denotes the $d$-dimensional spatial variable.

From the 
\levy-It\^{o} decomposition, see (\ref{LevyIto}), we get the following representation result for the trawl process 
 $Y_t = L(A_t)$  defined in \eqref{trawlproc}, i.e.
 \begin{align}\begin{split}\label{LItrawl}
L(A_t) &= a\leb(A) + W(A_t) + \int_{\{|x|>1\}}x N(dx, A_t) + \int_{\{|x|\leq 1\}} x(N-n)(dx, A_t)\\
&= a \leb(A)+ W(A_t) + \int_{A_t}\int_{\{|x|>1\}}x N(dx, d{\boldsymbol \xi}, ds) + \int_{A_t}\int_{\{|x|\leq 1\}} x(N-n)(dx, d{\boldsymbol \xi}, ds), 
\end{split}\end{align}
where $W(A_t) \sim N(0, b \leb(A))$ and $n(dx, d{\boldsymbol \xi}, ds) =  \nu(dx) d{\boldsymbol \xi} ds$ and   $\leb(A_t) = \leb(A)$.

\begin{ex}
Suppose the trawl process $Y_t$ is defined based on a \Levy basis with characteristic quadruplet $(0, b, 0, \leb)$. Then it can be written as
\begin{align*}
Y_t = W(A_t) \sim N(0, b \leb(A)).
\end{align*}
Assume further that   $d=1$ and that 
the trawl is given by $A_t =\{(x, s): s\leq t, 0 \leq x \leq \exp(-\lambda(t-s))\}$, for a positive constant $\lambda > 0$. Then $\leb(A_t) = \frac{1}{\lambda}$, hence $Y_t = W(A_t) \sim N(0, \frac{b}{\lambda})$ and the autocorrelation function is given by $r(h)= \exp(-\lambda h)$, for $h \geq 0$.
\end{ex}

\subsection{Generalised cumulant functional}
We have already studied the cumulant function of \Levy bases and ambit fields. Here, we will in addition focus on the more general \emph{cumulant functional} of a trawl process, which sheds some light on important properties of  trawl processes.
\begin{defi}
Let $Y=(Y_t)$ denote a stochastic process  and let $\mu$ denote \emph{any} non-random measure such that 
\begin{align*}
\mu(Y)=\int_{\R} Y_t \,\mu(dt) < \infty, 
\end{align*}
where the integral should exist a.s..
The \emph{generalised cumulant functional} of $Y$ w.r.t.~$\mu$ is defined as 
\begin{align*}
C\{\theta \ddagger \mu(Y)\} = \log \E\left(\exp\left(i \theta \mu(Y) \right)\right).
\end{align*}
\end{defi}
When we compute the cumulant functional for a trawl process, we obtain the following result.
 \begin{prop}
 Let $Y_t=L(A_t)$ denote a trawl process and let $\mu$ denote \emph{any} non-random measure such that 
$\mu(Y)=\int_{\R} Y_t \,\mu(dt) < \infty$, a.~s..
Given the trawl $A$, we will further assume that for all ${\boldsymbol \xi} \in \R^{d}$, 
\begin{align*}
h_A({\boldsymbol \xi}, s)=\left(\int_{\R} 1_{A}({\boldsymbol \xi}, s-t)\mu(dt)\right)<\infty,
\end{align*}
and that $h_A({\boldsymbol \xi}, s)$ is integrable with respect to the \Levy basis $L$.
Then the cumulant function of $\mu(Y)$ is given by 
\begin{align}\begin{split}\label{cumfct}
C\{\theta \ddagger \mu(Y)\} 
 &= 
i\theta a\int_{\R\times \R^d}  h_A({\boldsymbol \xi}, s)    d{\boldsymbol \xi}ds
-\frac{1}{2}\theta^2 b\int_{\R\times \R^d}  h_A^2({\boldsymbol \xi}, s)    d{\boldsymbol \xi} ds
\\
&\hspace{0.5cm}+ \int_{\R\times \R^d} \int_{\R} 
 \left( \exp (i\theta u x) -1 -i\theta ux \mathbf{I}_{[-1,1]}(x)\right)\nu(dx)  \chi(du),
\end{split}\end{align}
where $\chi $\ is the measure on $\mathbb{R}$\ obtained by lifting the Lebesgue
measure on $\mathbb{R}^d\times \mathbb{R}$ to $\mathbb{R}$ by the mapping $\left(
{\boldsymbol \xi}, s \right) \rightarrow ({\boldsymbol \xi}, h_A(s, {\boldsymbol \xi}))$.
\end{prop}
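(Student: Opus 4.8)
The plan is to recognise $\mu(Y)$ as a single deterministic integral against the \Levy basis $L$ and then apply the cumulant formula for such integrals. Starting from the representation $Y_t = L(A_t) = \int_{\R^d\times\R} 1_A({\boldsymbol\xi}, s-t)\, L(d{\boldsymbol\xi}, ds)$, I would substitute this into $\mu(Y) = \int_\R Y_t\,\mu(dt)$ and interchange the (deterministic) $\mu$-integration with the stochastic $L$-integration. Since $\mu$ is non-random, this is a deterministic--stochastic Fubini step: for $\mu$ a finite combination of point masses it reduces to linearity of the integral, and the general case follows by approximating $\mu$ and passing to the $P$-limit underlying the \cite{RajRos89} integral. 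The outcome is
\[
\mu(Y) = \int_{\R^d\times\R} h_A({\boldsymbol\xi}, s)\, L(d{\boldsymbol\xi}, ds),
\]
where $h_A({\boldsymbol\xi}, s) = \int_\R 1_A({\boldsymbol\xi}, s-t)\,\mu(dt)$. The hypotheses that $h_A$ is finite and $L$-integrable are exactly what make the right-hand side well defined.

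Once $\mu(Y)$ is expressed as a deterministic integral against $L$, I would invoke \citet[Proposition 2.6]{RajRos89}, precisely as in Proposition \ref{prop:cumulant-ambit}, to obtain
\[
C\{\theta\ddagger\mu(Y)\} = \int_{\R^d\times\R} C\{\theta h_A({\boldsymbol\xi},s)\ddagger L'\}\, d{\boldsymbol\xi}\, ds,
\]
using that $L$ is homogeneous, so its control measure is Lebesgue measure and its \Levy seed $L'$ does not depend on $({\boldsymbol\xi}, s)$. Substituting the \levy-Khintchine representation \eqref{CharL'} of $L'$ with characteristics $(a, b, \nu)$ splits the integral into a drift, a Gaussian and a jump piece.

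The drift and Gaussian terms integrate directly to $i\theta a \int_{\R\times\R^d} h_A\, d{\boldsymbol\xi}\, ds$ and $-\tfrac12\theta^2 b \int_{\R\times\R^d} h_A^2\, d{\boldsymbol\xi}\, ds$, matching the first two terms of \eqref{cumfct}. For the jump part, the inner integrand
\[
G(u) := \int_\R \left(e^{i\theta u x} - 1 - i\theta u x\,\indicator_{[-1,1]}(x)\right)\nu(dx)
\]
depends on $({\boldsymbol\xi}, s)$ only through the value $u = h_A({\boldsymbol\xi}, s)$. I would therefore apply the transfer theorem for image measures: letting $\chi$ denote the push-forward of Lebesgue measure on $\R^d\times\R$ under the map $({\boldsymbol\xi}, s)\mapsto h_A({\boldsymbol\xi}, s)$, one has $\int_{\R^d\times\R} G(h_A({\boldsymbol\xi}, s))\, d{\boldsymbol\xi}\, ds = \int_\R G(u)\,\chi(du)$, which is exactly the stated third term. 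Assembling the three contributions gives \eqref{cumfct}.

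The main obstacle is the rigorous justification of the deterministic--stochastic Fubini interchange in the first step, because the \cite{RajRos89} integral is only defined as a $P$-limit of simple-function integrals rather than as a pathwise integral; care is needed to show that the point-mass approximations of $\mu$ converge simultaneously in both integration variables and that the limiting integrand $h_A$ inherits $L$-integrability from the stated assumptions. Everything after that is a routine substitution of \eqref{CharL'} together with the elementary change-of-variables identity defining $\chi$.
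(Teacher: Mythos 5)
Your proposal is correct and follows essentially the same route as the paper: a Fubini interchange to rewrite $\mu(Y)=\int_{\R\times\R^d}h_A({\boldsymbol\xi},s)\,L(d{\boldsymbol\xi},ds)$, then the \cite{RajRos89} cumulant formula for deterministic integrands, the L\'evy--Khintchine expansion of the seed, and the image-measure change of variables for the jump term. The paper handles the Fubini step you flag as the main obstacle simply by citing the stochastic Fubini theorem of \cite{BNBasse2009}, so your extra care there is compatible with, though more detailed than, the published argument.
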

\begin{proof}
An application of Fubini's theorem (see e.g.~\cite{BNBasse2009}) yields 
\begin{align*}
\mu(Y)&= \int_{\R} \int_{\R\times \R^d}1_{A}({\boldsymbol \xi}, s-t)L(d{\boldsymbol \xi},ds) \,\mu(dt)
=\int_{\R\times \R^d}\underbrace{\left(\int_{\R} 1_{A}({\boldsymbol \xi}, s-t)\mu(dt)\right)}_{=h_A({\boldsymbol \xi}, s)}L(d{\boldsymbol \xi}^,ds)\\
&= \int_{\R\times \R^d}h_A({\boldsymbol \xi}, s) L(d{\boldsymbol \xi},ds).
\end{align*}
From this we find that the cumulant
function of $\mu(Y)$, i.e.~the generalised  cumulant functional of $Y$ w.r.t.~$\mu$,  is given by
\begin{align*}
&C\{\theta \ddagger \mu(Y)\} = \int_{\R\times \R^d}
C\{\theta h_A({\boldsymbol \xi}, s) \ddagger L'\} d{\boldsymbol \xi}ds\\
 &= 
i\theta a\int_{\R\times \R^d}  h_A({\boldsymbol \xi},s)    d{\boldsymbol \xi}ds
-\frac{1}{2}\theta^2 b\int_{\R\times \R^d}  h_A^2({\boldsymbol \xi},s)   d{\boldsymbol \xi} ds
\\
&\hspace{0.5cm}+ \int_{\R\times \R^d} \int_{\R} 
 \left( \exp (i\theta h_A({\boldsymbol \xi},s) x) -1 -i\theta h_A({\boldsymbol \xi}, s)x \mathbf{I}_{[-1,1]}(x)\right)\nu(dx) d{\boldsymbol \xi} ds.
\end{align*}
Note that the latter part, i.e. the jump part of $C\{\theta \ddagger \mu(Y)\}$, can be recast as 
\begin{align*}
&\int_{\R\times \R^d} \int_{\R} 
 \left( \exp (i\theta h_A({\boldsymbol \xi}, s) x) -1 -i\theta h_A({\boldsymbol \xi}, s)x \mathbf{I}_{[-1,1]}(x)\right)\nu(dx)   d{\boldsymbol \xi} ds\\
 &=\int_{\R\times \R^d} \int_{\R} 
 \left( \exp (i\theta u x) -1 -i\theta ux \mathbf{I}_{[-1,1]}(x)\right)\nu(dx)  \chi(du),
\end{align*}
where $\chi $\ is the measure defined as above. Then the result follows.
\end{proof}

In  the following, we mention three relevant choices of the measure $\mu$. 
Let $\delta_t$ denote the Dirac measure at $t$.
We start with a very simple case:
\begin{ex}
Suppose $\mu(t)= \delta(dt)$ for a fixed $t$. Hence 
\begin{align*}
h_A({\boldsymbol \xi}, s)=\left(\int_{\R} 1_{A}({\boldsymbol \xi}, s-t)\mu(dt)\right)= 1_{A}({\boldsymbol \xi}, s-t)<\infty,
\end{align*}
and clearly, $h_A( {\boldsymbol \xi}, s)$ is integrable with respect to $L$. 
Then
\begin{align*}
C\{\theta \ddagger \mu(Y)\}
=C\{\theta \ddagger Y_{t}\} 
= \leb(A) C\{\theta \ddagger L'\}. 
\end{align*}
\end{ex}
This is exactly the result we derived in Section \ref{Cum} above. 
More interesting is the case when $\mu$ is given by a linear combination of different Dirac measures, since this allows us to derive the joint finite dimensional laws of the trawl process and not just the distribution for fixed $t$.
\begin{ex}
Suppose  $\mu \left( \mathrm{d}t\right) =\theta _{1}\delta
_{t_{1}}\left( \mathrm{d}t\right) +\cdots +\theta _{n}\delta _{t_{n}}\left( 
\mathrm{d}t\right) $ for constants $\theta_1, \dots, \theta_n \in \R$ and times $t_1 < \cdots < t_n$ for $n \in \N$. 
 As in the example before, the integrability conditions which were needed to derive (\ref{cumfct}) are satisfied and, hence, (\ref{cumfct}) gives us the cumulant function of the joint
law of $Y_{t_{1}},\dots,Y_{t_{n}}$.  
\end{ex}
Finally, another case of interest is the integrated trawl process, which we study in the next example.
\begin{ex} Let 
$\mu \left( \mathrm{d}t\right)
=1_{I}\left( t\right) \mathrm{d}t$ for an interval $I$\ of $\mathbb{R}$. Then (\ref{cumfct}) determines  the law of $\int_{I}Y_{s}\mathrm{d}s$.
\end{ex}
\begin{rem}
The last example is particularly relevant if the trawl process is for instance used for modelling stochastic volatility. Note that such an application is feasible since the trawl process is stationary and we can formulate assumptions which would ensure the positivity of the process as well (e.g.~if we work with a \Levy subordinator as the corresponding \Levy seed). In that context, integrated volatility is a quantity of key interest. 
\end{rem}
\subsection{The increment process}\label{SectIncr}
Finally, we focus on the increments of a trawl process.
Note that whatever the type of trawl, we have
the following representation for the increments of the process for $s<t$,%
\begin{equation}
Y_{t}-Y_{s}=L\left( A_{t}\backslash A_{s}\right) -L\left( A_{s}\backslash
A_{t}\right),   \label{Y-Y} \qquad \text{almost surely}.
\end{equation}%
Due to the independence of $L\left( A_{t}\backslash A_{s}\right)$ and $-L\left( A_{s}\backslash A_{t}\right)$, we get the following representation for the cumulant function of the returns
\begin{align}\begin{split}\label{CumIncr}
C(\zeta \ddagger Y_t-Y_s) &= C(\zeta \ddagger L\left( A_{t}\backslash A_{s}\right) )
+ C(-\zeta \ddagger L\left( A_{s}\backslash A_{t}\right) )\\
&= \leb(A_t\backslash A_s)C(\zeta \ddagger L') + \leb(A_s\backslash A_t)C(-\zeta \ddagger L')\\
&= \leb(A_{t-s}\backslash A_0)C(\zeta \ddagger L') + \leb(A_0 \backslash A_{t-s})C(-\zeta \ddagger L').
\end{split}
\end{align}

\subsection{Applications of trawl processes}
Trawl processes constitute a class of stationary infinitely divisible stochastic processes and can be used in various applications. E.g.~we already pointed out above that they could be used for modelling stochastic volatility or intermittency.
In a recent article by \cite{BNLSV2012}, integer-valued trawl processes have been used to model count data or integer-valued data which are serially dependent. Speaking generally,  trawl processes can be viewed as a flexible class of stochastic processes which can be used to model stationary time series data, where the marginal distribution and the autocorrelation structure can be modelled independently from each other.


 \section{Tempo-spatial stochastic volatility/intermittency}
\label{Section:SpecVol}
Stochastic volatility/intermittency plays a key role in various applications including turbulence and finance. While a variety of purely temporal stochastic volatility/intermittency models can be found in the literature, suitable 
tempo-spatial stochastic volatility/intermittency models still need to be developed.

Volatility modulation within the framework of an ambit field can be achieved by four complementary  methods: By introducing a stochastic integrand (the term  $\sigma_t({\bf x})$ in the definition of an ambit field), 
or by (extended) subordination
or by probability mixing or \Levy mixing.
We will discuss all four methods in the following.

\subsection{Volatility modulation via a stochastic integrand}\label{SubSection:VolViaIntegrand}
Stochastic volatility in form of a stochastic integrand has already been included in the initial  definition of an ambit field, see \eqref{ambfi}.
The interesting aspect, which we have not addressed yet, is  how a model for the stochastic volatility field $\sigma_t(x)$ can be specified in practice. We will discuss several relevant choices in more detail in the following.

There are essentially two approaches which can be used for constructing a relevant stochastic volatility field: Either one specifies the stochastic volatility field directly as a random field (e.g.~as another ambit field), or one starts from a purely temporal (or spatial) stochastic volatility process and then  generalises the stochastic process to a random field in a suitable way.
In the following, we will present examples of both types of construction.

\subsubsection{Kernel-smoothing of a \Levy basis}
First, we focus on the modelling approach where we directly specify a random field for the volatility field.
A natural starting point for modelling the volatility is given by kernel-smoothing of a homogeneous \Levy basis -- possibly combined with a (nonlinear) transformation to ensure positivity. 
For instance, let
\begin{align}\label{Vol1}
\sigma^2_t({\bf x}) = V\left(\int_{\R_t\times \R^d}
j({\bf x}, {\boldsymbol \xi}, t-s) L^{\sigma}(d{\boldsymbol \xi}, ds)\right),
\end{align}
where $L^{\sigma}$ is a homogeneous \Levy basis independent of $L$, $j:\R^{d+1+d}\mapsto \R_+$ is an integrable kernel function satisfying $j({\bf x}, u, {\boldsymbol \xi})=0$ for $u< 0$  and
 $V: \R \to \R_+$ is a continuous, non-negative  function.  
 Note that $\sigma^2$ defined by (\ref{Vol1})  is stationary in the temporal dimension.
As soon as $j({\bf x}, {\boldsymbol \xi}, t-s)= j^*({\bf x}-{\boldsymbol \xi}, t-s)$ for some function $j^*$ in \eqref{Vol1}, then the stochastic volatility is both stationary in time and homogeneous in space.

Clearly, the kernel function $j$ determines the tempo-spatial autocorrelation structure of the volatility field.

Let us discuss some examples next.
\begin{ex}[Tempo-spatial trawl processes]
Suppose  the kernel function is given by 
\begin{align*}
j({\bf x},{\boldsymbol \xi}, t-s) =  {\bf I}_{A^{\sigma}}({\boldsymbol \xi}-{\bf x}, s-t),
\end{align*}
where $A^{\sigma}\subset \R^d\times (-\infty, 0]$. Further, let $A_t^{\sigma}({\bf x}) = A^{\sigma} + ({\bf x},t)$; hence $A_t^{\sigma}({\bf x})$ is a homogeneous and nonanticipative ambit set.  Then 
\begin{align*}
\sigma_t^2({\bf x})=V\left(\int_{\R_t\times \R^d}
{\bf I}_A^{\sigma}({\boldsymbol \xi}-{\bf x}, s-t) L^{\sigma}(d{\boldsymbol \xi}, ds)\right)=V\left(\int_{A_t^{\sigma}({\bf x})}
 L^{\sigma}(d{\boldsymbol \xi}, ds)\right)
= V\left(L^{\sigma}(A^{\sigma}_t({\bf x}))\right).
\end{align*}
Note that the random field $L^{\sigma}(A^{\sigma}_t({\bf x}))$ can be regarded as a \emph{tempo-spatial trawl process}.
  \end{ex}
\begin{ex}
Let 
\begin{align*}
j({\bf x},{\boldsymbol \xi},t-s) = j^*({\bf x},{\boldsymbol \xi},t-s){\bf I}_{A^{\sigma}({\bf x})}({\boldsymbol \xi}, s-t),
\end{align*}
for an integrable kernel function $j^*$ and 
where $A^{\sigma}({\bf x})\subset \R^d\times (-\infty, 0]$. Further, let $A_t^{\sigma}({\bf x}) = A^{\sigma}({\bf x}) + ({\bf 0},t)$.
Then
\begin{align}\label{GenVol}
\sigma^2_t({\bf x}) = V\left(\int_{A_t^{\sigma}({\bf x})}
j({\boldsymbol \xi}, {\bf x}, t-s) L^{\sigma}(d{\boldsymbol \xi}, ds)\right),
\end{align}
which is a transformation of an ambit field (without stochastic volatility).
\end{ex}

 Let us look at some more concrete examples for the stochastic volatility field.
 \begin{ex}
A rather simple specification is given by choosing $L^{\sigma}$ to be a standard normal \Levy basis and $V(x)=x^2$. 
Then $\sigma_s^2({\boldsymbol \xi})$ would be positive and pointwise $\chi^2$-distributed with one degree of freedom.
\end{ex}
\begin{ex} One could also work with a general   \Levy basis, in particular Gaussian,  and $V$ given by the exponential function, see e.g. \cite{BNSch04} and \cite{SBNE2005}.
\end{ex}
\begin{ex} A non-Gaussian example would be to choose $L^{\sigma}$ as an inverse Gaussian \Levy basis and  $V$ to be the identity function.
 \end{ex}
 
 \begin{ex}
 We have already mentioned that the
 kernel function $j$  determines the autocorrelation structure of the volatility field. E.g.~in the absence of spatial correlation 
 one could start off with the choice   $j({\bf x},t-s, {\boldsymbol \xi}) = \exp(-\lambda (t-s))$ for $\lambda > 0$ mimicking the Ornstein-Uhlenbeck-based stochastic volatility models, see e.g.~\cite{BNSch04}.
\end{ex}

\subsubsection{Ornstein-Uhlenbeck volatility fields}
Next, we show how to construct a stochastic volatility field by extending a stochastic process by a spatial dimension.
Note that our objective is to construct a stochastic volatility field which is stationary (at least in the temporal direction). 
Clearly, there are many possibilities on how this can be done and we focus on a particularly relevant one in the following, namely the \emph{Ornstein-Uhlenbeck-type volatility field} (OUTVF). The choice of using an OU process as the  stationary base component is motivated by the fact that non-Gaussian OU-based stochastic volatility models, as e.g.~studied in \cite{BNS2001a}, are analytically tractable and tend to perform  well in practice, at least in the purely temporal case. In the following, we will restrict our attention to the case $d=1$, i.e.~that the spatial dimension is one-dimensional.

Suppose now that $\tilde{Y}$ is a positive OU type process with rate parameter $\lambda > 0$
and generated by a L\'{e}vy subordinator $Y$, i.e.%
\begin{equation*}
\tilde{Y}_{t}=\int_{-\infty }^{t}e^{-\lambda \left( t-s\right) }\mathrm{d}%
Y_{s},
\end{equation*}%
 We call a  stochastic volatility field $\sigma _{t}^{2}\left( x\right)$ on  $\mathbb{R}\times \mathbb{R}$ an
\emph{Ornstein-Uhlenbeck-type volatility field} (OUTVF), if it is defined as follows
\begin{equation}
\tau _{t}\left( x\right) =\sigma _{t}^{2}\left( x\right) =e^{-\mu x}\tilde{Y}%
_{t}+\int_{0}^{x}e^{-\mu \left( x-\xi \right) }\mathrm{d}Z_{\xi |t},
\label{voldef}
\end{equation}%
 where  $\mu > 0$ is the spatial rate parameter
 and 
where 
$\mathcal{Z}=\left\{ Z_{\cdot |t}\right\} _{t\in \mathbb{R}_{+}}$ is a
family of L\'{e}vy processes, which we define more precisely in the next but one paragraph.

Note  that in the above construction, we start from an OU process in time. In particular,  $\tau_t(0)$ is an OU process. The spatial structure is then introduced 
by two components:
First, we we add an exponential weight $e^{-\mu x}$ in the spatial direction, which reaches its maximum for $x=0$ and decays the further away we get from the purely temporal case.
Second, an integral is added which resembles an OU-type process in the spatial variable $x$. However, note here that the integration starts from $0$ rather than from $-\infty$,  and hence the resulting component is not stationary in the spatial variable $x$. (This could be changed if required in a particular
 application.)

Let us now focus in more detail on how to define the family of \Levy processes $\mathcal{Z}$.
Suppose  $\tilde{X}=\left\{ \tilde{X}_{t}\right\} _{t\in \mathbb{R}}$\ is  a
stationary, positive and infinitely divisible process on $\mathbb{R}$.
Next we define 
 $Z_{|\cdot }=\left\{ Z_{x|\cdot }\right\} _{x\in \mathbb{R}_{+}}$\ as the so-called \emph{L\'{e}vy supra-process} generated by $\tilde{X}$, that is $\left\{
Z_{x|\cdot }\right\} _{x\in \mathbb{R}_{+}}$\ is a family of stationary
processes such that $Z_{|\cdot }$ has independent increments, i.e. for any $%
0<x_{1}<x_{2}<\cdots <x_{n}$ the processes $Z_{x_{1}|\cdot },Z_{x_{2}|\cdot
}-Z_{x_{1}|\cdot },\dots,Z_{x_{n}|\cdot }-Z_{x_{n-1|\cdot }}$ are mutually
independent, and such that for each $x$\ the cumulant functional of $%
Z_{x|\cdot }$\ equals $x$ times the cumulant functional of $\tilde{X}$, i.e. 
\begin{align*}\mathrm{C}\{m\ddagger Z_{x|\cdot }\}=x\mathrm{C}\{m\ddagger \tilde{X}\},
\end{align*}
where%
\begin{equation*}
\mathrm{C}\{m\ddagger \tilde{X}\}=\log \mathrm{E}\left\{ e^{im\left( \tilde{X%
}\right) }\right\},
\end{equation*}%
with 
$m\left( \tilde{X}\right) =\int \tilde{X}_{s}m\left( \mathrm{d}s\right)$, $m$\ denoting an \lq arbitrary\rq\ signed measure on $\mathbb{R}$. Then at any $%
t\in \mathbb{R}$ the values $Z_{x|t}$\ of $Z_{\cdot |\cdot }$\ at time $t$\
as $x$\ runs through $\mathbb{R}_{+}$\ constitute a L\'{e}vy process that we
denote by $Z_{\cdot |t}$. This is the L\'{e}vy process occurring in the
integral in (\ref{voldef}).

Note that $\tau $\ is stationary in $t$ and that $\tau _{t}\left(
x\right) \rightarrow \tilde{Y}_{t}$ as $x\rightarrow 0$.

\begin{ex}
Now suppose, for simplicity, that $\tilde{X}$\ is an OU process with rate
parameter $\kappa $ and generated by a L\'{e}vy process $X$. Then%
\begin{align*}
\mathrm{Cov}\{\tau _{t}\left( x\right) ,\tau _{t^{\prime }}\left( x^{\prime
}\right) \}=\frac{1}{2}\left( \mathrm{Var}\{Y_{1}\}\lambda ^{-1}e^{-\lambda
\left( \left\vert t-t^{\prime }\right\vert \right) -\mu \left( x+x^{\prime
}\right) }+\mathrm{Var}\{\tilde{X}_{0}\}\mu ^{-1}e^{-\kappa |t-t^{\prime
}|-\mu \left\vert x-x^{\prime }\right\vert }\right.\\
\left.-\mathrm{Var}\{\tilde{X}%
_{0}\}\mu ^{-1}e^{-\kappa \left\vert t-t^{\prime }\right\vert -\mu \left(
x+x^{\prime }\right) }\right).
\end{align*}%
If, furthermore, $\mathrm{Var}\{Y_{1}\}=\mathrm{Var}\{\tilde{X}_{0}\}$ and $%
\kappa =\lambda =\mu $\ then for fixed $x$\ and $x^{\prime }$\ the
autocorrelation function of $\tau $\ is%
\begin{equation*}
\mathrm{Cor}\{\tau _{t}\left( x\right) ,\tau _{t^{\prime }}\left( x^{\prime
}\right) \}=e^{-\kappa |t-t^{\prime }|}e^{-\kappa \left\vert x-x^{\prime
}\right\vert }.
\end{equation*}
\end{ex}

This type of construction can of course be generalised in a variety of ways,
including dependence between $X$\ and $Y$\ and also superposition of OU
processes.

Note that the process $\tau_t(x)$ is in general not predictable, which is disadvantageous given that  we want to construct Walsh-type stochastic integrals. However, if we choose $\widetilde X$ to be of OU type, then we obtain a predictable stochastic volatility process.

\subsection{Extended subordination and meta-times}
An alternative way of volatility modulation  is by means of (extended) subordination.
Extended subordination and meta-times are important concepts in the ambit framework, which have recently been introduced by \cite{BN2010} and \cite{BNPedersen2010}, and we will review their main results in the following.
Note that  \emph{extended subordination} generalises the classical concept of subordination of L\'{e}vy
processes to \emph{subordination of L\'{e}vy bases}.
This in turn will be based on a concept of \emph{meta-times}.

\subsubsection{Meta-times}
This section reviews the concept
of meta-times, which we will  link to the idea of extended subordination in the following section.

\begin{defi} Let $S$ be a Borel set in $\mathbb{R}%
^{k} $. A \emph{meta-time} $\mathbf{T}$ on $%
S$ is a mapping from $\mathcal{B}_{b}(S)$ into $%
\mathcal{B}_{b}(S)$ such that 
\begin{enumerate}
\item $\mathbf{T}\left( A\right) $ and $\mathbf{T}\left( B\right) $ are
disjoint whenever $A,B\in \mathcal{B}_{b}(S)$ are disjoint.
 \item $\mathbf{T}\left( \cup _{j=1}^{\infty }A_{j}\right) =\cup
_{j=1}^{\infty }\mathbf{T}\left( A_{j}\right) $ whenever $A_{1},A_{2},\dots\in 
\mathcal{B}_{b}(S)$ are disjoint and $\cup _{j=1}^{\infty
}A_{j}\in \mathcal{B}_{b}(S)$.
\end{enumerate}
\end{defi}
A slightly more general definition is the following one.
\begin{defi} Let $S$\ be a Borel set in $\mathbb{R}%
^{k} $. A \emph{full meta-time} $\mathbf{T}$ on $S$\ is a mapping from $\mathcal{B}(S)$\ into $\mathcal{B}%
(S)$\ such that 
\begin{enumerate}
\item $\mathbf{T}\left( A\right) $ and $\mathbf{T}\left( B\right) $ are
disjoint whenever $A,B\in \mathcal{B}(S)$ are disjoint.
\item $\mathbf{T}\left( \cup _{j=1}^{\infty }A_{j}\right) =\cup
_{j=1}^{\infty }\mathbf{T}\left( A_{j}\right) $ whenever $A_{1},A_{2},\dots\in 
\mathcal{B}(S)$ are disjoint.
\end{enumerate}
\end{defi}
Suppose $\mu $\ is a measure on $\left( S,\mathcal{B}(S)\right) $ and let $\mathbf{T}$\ be a full meta-time on $S$. 
Define $\mu \left( \cdot \curlywedge \mathbf{T}\right) $ as the mapping from
\ $\mathcal{B}(S)$ into $\mathbb{R}$ given by $\mu \left(
A\curlywedge \mathbf{T}\right) =\mu \left( \mathbf{T}\left( A\right) \right) 
$ for any $A\in \mathcal{B}(S)$. Then $\mu \left( \cdot
\curlywedge \mathbf{T}\right) $ is a measure on $\left( S,\mathcal{%
B}(S)\right)$.
We speak of $\mu \left( \cdot \curlywedge \mathbf{T}\right) $ as the \emph{%
subordination of} $\mu $ \emph{by} $\mathbf{T}$.
Similarly, if $\mathbf{T}$ is a meta-time on $S$ we speak of $\mu
\left( A\curlywedge \mathbf{T}\right) =\mu \left( \mathbf{T}\left( A\right)
\right) $ for $A\in \mathcal{B}_{b}(S)$\ as the subordination of $%
\mu $\ by $\mathbf{T}$.

Let us now recall an important result, which says that any  measure, which is finite on compacts, can be represented as the image measure of the Lebesgue measure of a meta-time.
\begin{lem}{\citet[Lemma 3.1]{BNPedersen2010}}\label{ImOfLeb}
Let $T$\ be a measure on $\mathcal{B}(S)$\
satisfying $T\left( A\right) <\infty $ for all $A\in \mathcal{B}_{b}(%
S)$ \ Then there exists a meta-time $\mathbf{T}$\ such that, for
all $A\in \mathcal{B}_{b}(S)$, we have 
\begin{equation*}
T\left( A\right) =\mathrm{Leb}\left( \mathbf{T}\left( A\right) \right) .
\end{equation*}%
We speak of $\mathbf{T}$ as a \emph{meta-time} associated to $T$.
\end{lem}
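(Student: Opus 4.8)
The plan is to reduce the construction of the meta-time to the existence of a single measurable point map, exploiting the fact that the two defining axioms of a meta-time are precisely the functorial properties of preimages. Concretely, I would first look for a Borel subset $S'\subseteq S$ and a measurable map $g:S'\to S$ whose image measure satisfies $g_*(\mathrm{Leb}|_{S'})=T$, i.e. $\mathrm{Leb}(g^{-1}(A))=T(A)$ for every $A\in\mathcal{B}(S)$. Given such a $g$, I would simply set $\mathbf{T}(A):=g^{-1}(A)$. Since $g^{-1}(A)\cap g^{-1}(B)=g^{-1}(A\cap B)=\emptyset$ for disjoint $A,B$, and $g^{-1}(\cup_j A_j)=\cup_j g^{-1}(A_j)$, both axioms in the definition of a meta-time hold automatically; moreover $\mathrm{Leb}(\mathbf{T}(A))=T(A)$ by construction, and $\mathbf{T}(A)\in\mathcal{B}_b(S)$ because its Lebesgue measure equals $T(A)<\infty$ for $A\in\mathcal{B}_b(S)$. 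Thus everything reduces to realizing $T$ as a pushforward of Lebesgue measure.

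To build $g$ I would use that $T$ is $\sigma$-finite (it is finite on bounded Borel sets) and exhaust $S$ by a countable disjoint family of bounded Borel pieces $C_1,C_2,\dots$ with $T(C_n)<\infty$. I would then carve out of $S$ a disjoint sequence of \emph{rooms} $D_1,D_2,\dots$ with $\mathrm{Leb}(D_n)=T(C_n)$; this is possible, successively, because Lebesgue measure on $S$ is non-atomic and (in the relevant case $S=\R^k$) of infinite total mass, so there is always enough unused room to accommodate the next $D_n$. On each room I would define $g$ as a measurable surjection $g_n:D_n\to C_n$ with $(g_n)_*(\mathrm{Leb}|_{D_n})=T|_{C_n}$, and finally glue the pieces together by $g:=g_n$ on $D_n$, with $S':=\cup_n D_n$.

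The heart of the matter is the per-piece realization: showing that a finite Borel measure $T|_{C_n}$ of total mass $m_n:=T(C_n)$ on the standard Borel space $C_n$ is the image of Lebesgue measure on a set of Lebesgue measure $m_n$. I would obtain this from the one-dimensional inverse-distribution-function construction after transporting $C_n$ to $\R$ by a Borel isomorphism: writing $F$ for the (generalised) distribution function of the transported measure and $F^{\leftarrow}$ for its right-continuous inverse, the map $F^{\leftarrow}$ pushes $\mathrm{Leb}|_{[0,m_n)}$ forward to the given measure. Atoms of $T$ pose no difficulty here — they simply correspond to intervals on which $F^{\leftarrow}$ is constant, i.e. to positive-Lebesgue-measure sets collapsed to a point — which is exactly why a non-atomic reference measure can produce an atomic image.

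The main obstacle I anticipate is this measure-realization step together with its bookkeeping: ensuring the maps $g_n$ can be chosen jointly measurable so that the glued map $g$ is Borel, handling the mixed atomic and continuous structure of a general $T$ uniformly, and guaranteeing that the rooms $D_n$ fit inside $S$ (automatic when $\mathrm{Leb}(S)=\infty$, e.g. $S=\R^k$). By contrast, the verification of the meta-time axioms and of the identity $T(A)=\mathrm{Leb}(\mathbf{T}(A))$ is then purely formal, following immediately from the preimage representation $\mathbf{T}(A)=g^{-1}(A)$.
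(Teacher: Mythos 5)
The paper itself does not prove this lemma: it is imported by citation from \citet[Lemma 3.1]{BNPedersen2010}, and the only constructive content offered in the surrounding text is the remark that $\mathbf{T}$ can be realised as $\mathbf{T}(A)=\{\mathbb{T}({\bf x}):{\bf x}\in A\}$ for a point map $\mathbb{T}=\mathbb{U}^{-1}$, together with the explicit Jacobian recipe $|\mathbb{T}_{/{\bf x}}|=\tau({\bf x})$ in the special case where $T$ is absolutely continuous. Your proposal implements exactly that mechanism, but in greater generality: writing $\mathbf{T}(A)=g^{-1}(A)$ for a measurable $g$ with $g_{*}(\leb|_{S'})=T$ makes both meta-time axioms tautological consequences of preimage functoriality, and your piecewise quantile-transform construction of $g$ (Borel isomorphism to $\R$, then the generalised inverse of the distribution function on an interval of length $T(C_n)$) handles atomic and singular parts of $T$, which the paper's forward-image Jacobian sketch cannot, since an atomic $T$ forces $g$ to be non-injective. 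The crux you identify --- realising a finite Borel measure on a standard Borel space as a pushforward of Lebesgue measure --- is standard, and the gluing over countably many rooms poses no measurability problem. So the argument is sound and is, if anything, a completion of what the paper only gestures at.

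One caveat deserves to be promoted from an aside to a hypothesis. The requirement that the rooms $D_n$ fit inside $S$ is not mere bookkeeping: if $\leb(S)<\infty$ and $T(S)>\leb(S)$ (e.g.\ $S=(0,1)$ and $T=2\,\leb$), then no map $\mathbf{T}$ into $\mathcal{B}_b(S)$ can satisfy $\leb(\mathbf{T}(S))=T(S)$, so the lemma as stated is false without assuming $\leb(S)=\infty$ (as when $S=\R^k$) or at least enough room in $S$. You flag this correctly; just note that it constrains the statement, not only your proof. Two small points of hygiene: in this paper $\mathcal{B}_b(S)$ means finite Lebesgue measure rather than topological boundedness, so your verification that $\mathbf{T}(A)\in\mathcal{B}_b(S)$ via $\leb(g^{-1}(A))=T(A)<\infty$ is exactly what is needed; and the exhausting pieces $C_n$ should be chosen bounded, hence in $\mathcal{B}_b(S)$, so that $T(C_n)<\infty$ follows from the stated hypothesis on $T$.
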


\begin{rem}$\mathbf{T}$\ can be chosen so that $\mathbf{T}\left(
A\right) =\left\{ \mathbb{T}\left( {\bf x}\right) :{\bf x}\in A\right\} $ where $\mathbb{%
T}$ is the inverse of a measurable mapping $\mathbb{U}: S\rightarrow S$ determined from $\mathbf{T}$. Here integration under
subordination satisfies

\begin{equation*}
\int_{S}f\left( s\right) \mu \left( \mathrm{d}s\curlywedge
\mathbf{T}\right) =\int_{S}f\left( s\right) \mu \left( \mathbf{T}\left( 
\mathrm{d}s\right) \right) =\int_{S}f\left( \mathbb{U}\left(
u\right) \right) \mu \left( \mathrm{d}u\right) .
\end{equation*}
\end{rem}

Suppose $S$ is open and that $T$\ is a measure on $S$\
which is absolutely continuous with density $\tau$. If we can find a
mapping $\mathbb{T}$\ from $S$\ to $S$\ sending Borel
sets into Borel sets and such that the Jacobian of $\mathbb{T}$ exists and
satisfies%
\begin{equation*}
|\mathbb{T}_{/{\bf x}}|=\tau \left( {\bf x}\right),
\end{equation*}%
then $\mathbf{T}$ given by $\mathbf{T}\left( A\right) =\left\{ \mathbb{T}%
\left( {\bf x}\right) :{\bf x}\in A\right\} $, is a natural choice of meta-time induced
by $T$.
In fact, by the change of variable formula,%
\begin{equation*}
\leb\left( \mathbf{T}\left( A\right) \right) =\int 1_{\mathbf{T}%
\left( A\right) }(y)\mathrm{d}y=\int 1_{A}\left( {\bf x}\right) |\mathbb{T}_{/{\bf x}}|%
\mathrm{d}{\bf x}=\int_{A}\tau \left( {\bf x}\right) \mathrm{d}{\bf x}=T\left( A\right) ,
\end{equation*}%
verifying that $\mathbf{T}$\ is a meta-time associated to $T$.

\begin{rem}
$\mathbf{T}$ is not uniquely determined by $T$ and two
different meta-times associated to $T$ may yield different subordinations of
one and the same measure $\mu$.
\end{rem}

\subsubsection{Extended subordination of Levy bases}
Let us now study the concept of subordination in the case where we deal with a \Levy basis.
Let $T$ be a full random measure on $S$. Then, by 
Lemma \ref{ImOfLeb}, there exists a.s. a random meta-time $\mathbf{T}$ determined by $T$  and with the property that 
$\leb\left( \mathbf{T}\left( A\right) \right) =T\left( A\right)$,
for all $A\in \mathcal{B}_{b}(S)$. There are two cases  of particular interest to us. 
First,
 $\mathbf{T}$ is induced by a L\'{e}vy basis $L$ on $S$ that
is non-negative, dispersive and of finite variation. 
Second,  $\mathbf{T}$ is induced by an absolutely continuous random measure $T$
on $S$\ with a non-negative density $\tau $ satisfying $T\left(
A\right) =\int_{A}\tau \left( {\bf z}\right) \mathrm{d}{\bf z}<\infty $\ for all $A\in 
\mathcal{B}_{b}(S)$.
\begin{defi}
Let $L$ be a L\'{e}vy basis on $S$ and
let $T$, independent of $L$, be a full random measure on $S$. The
\emph{extended subordination} of $L$
 by $T$ is the random measure $L(\cdot \curlywedge T)$
defined by\ 
\begin{equation*}
L(A\curlywedge T)=L\left( \mathbf{T}\left( A\right) \right)
\end{equation*}%
for all $A\in \mathcal{B}_{b}\left( S\right) $\ and where $\mathbf{%
T}$\ is a meta-time induced by $T$ (in which case $\leb\left( 
\mathbf{T}\left( A\right) \right) =T\left( A\right) $).
\end{defi}
Note that the L\'{e}vy basis $L$\ may be $n$-dimensional.
We shall occasionally write $L\curlywedge T$ for $L\left( \cdot \curlywedge
T\right) $, which is   a random measure on $(%
S,\mathcal{B}_{b}(S))$.

In the case that $L$ is a homogeneous \Levy basis and 
since  $T$ is assumed to be independent of $L$,\ $L(\cdot \curlywedge T)$
is a (in general not homogeneous) L\'{e}vy basis, whose conditional cumulant
function satisfies%
\begin{equation}\label{CumExtSub}
\mathrm{C}\{\zeta \ddagger L(A\curlywedge T)|T\}=T(A)\mathrm{C}\{\zeta
\ddagger L^{\prime }\}
\end{equation}%
for all $A\in \mathcal{B}_{b}(\mathcal{S})$ and where $L^{\prime }$\ is the L%
\'{e}vy seed of $L$.
On a distributional level one may, without attention to
the full probabilistic definition of $L(\cdot \curlywedge T)$ presented
above, carry out many calculations purely from using the identity established in (\ref{CumExtSub}).

\subsubsection*{Remarks}

 The two key formulae $L(\cdot \curlywedge T)=L\left( \mathbf{T}\left(
\cdot \right) \right) $ and $\leb\left( \mathbf{T}\left( A\right)
\right) =T\left( A\right) $\ show that the concepts of extended
subordination and meta-time together generalise the classical subordination
of L\'{e}vy processes.
 Provided that $L$ is homogeneous we have that 
\begin{equation*}
\mathrm{C}\{\zeta \ddagger L(A\curlywedge T)|T\}=\leb\left( \mathbf{T%
}\left( A\right) \right) \mathrm{C}\{\zeta \ddagger L^{\prime }\},
\end{equation*}%
and hence%
\begin{equation*}
\mathrm{C}\{\zeta \ddagger L(A\curlywedge T)\}=\log \mathrm{E}\left\{
e^{T\left( A\right) \mathrm{C}\{\zeta \ddagger L^{\prime }\}}\right\}.
\end{equation*}
Hence we can deduce the following results.
\begin{itemize}
\item The values\ of the subordination $L(\cdot \curlywedge T)$\ of $L$\ are
infinitely divisible provided the values of $T$\ are infinitely divisible
and $L$\ is homogeneous.

\item If $L$\ is homogeneous and if $T$\ is a homogeneous L\'{e}vy\ basis
then $L(\cdot \curlywedge T)$\ is a homogeneous L\'{e}vy basis.

\item In general, $\mathbf{T}$\ is not uniquely determined by $T$.
Nevertheless, provided the L\'{e}vy basis $L$\ is homogeneous the law of $%
L(\cdot \curlywedge T)$ does not depend on the choice of meta-time $\mathbf{T%
}$.
\end{itemize}

\subsubsection*{L\'{e}vy-It\^{o} type representation of $L\left( \cdot \curlywedge
\cdot \right) $}
We have already reviewed the L\'{e}vy-It\^{o} representation for a 
 dispersive L\'{e}vy basis $L$, see (\ref{LevyIto}).
It follows directly that the subordination of $L$\ by the random measure $T$%
\ with associated meta-time $\mathbf{T}$\ has a L\'{e}vy-It\^{o} type
representation
\begin{align*}
L\left( A\curlywedge T\right) &=
a\left( \mathbf{T}\left( A\right) \right)
+\sqrt{b(\mathbf{T}\left( A\right))}W\left( \mathbf{T}\left( A\right) \right) \\
 &+\int_{\{\left\vert y\right\vert >1\}}y N\left( dy;\mathbf{T}\left(
 A\right) \right) 
 +\int_{\{\left\vert y \right\vert \leq 1\}} y\left( N-\nu\right)
 \left( dy;\mathbf{T}\left( A\right) \right) .
\end{align*}

\subsubsection*{L\'{e}vy measure of $L\curlywedge T$}
Suppose for simplicity that $L\curlywedge T$\ is non-negative. According to  \cite{BN2010}, the \Levy measure $\widetilde \nu$ of $L\curlywedge T$ is related to the L\'{e}vy measure $\nu$ of $T$\ by
\begin{equation*}
\widetilde \nu \left( \mathrm{d}x; s\right) =\int_{%
\mathbb{R}}P\left(  L^{\prime }_y(s) \in dx\right) \nu \left( 
dy; s\right).
\end{equation*}

\subsubsection{Extended subordination and volatility}
In the context of ambit stochastics one considers volatility fields $\sigma $
in space-time, typically specified by the squared field $\tau _{t}\left(
{\bf x}\right) =\sigma _{t}^{2}\left( {\bf x}\right) $.
 So far, we have used a stochastic volatility random field $\sigma$ in the integrand of the ambit field to introduce volatility modulation.
 
A complementary method consists of introducing stochastic volatility by extended subordination.
The volatility is incorporated in the modelling through a meta-time associated to the measure $T$\ on $S$\ and given by 
\begin{equation*}
T\left( A\right) =\int_{A}\tau _{t}\left( {\bf x}\right) \mathrm{d}{\bf x}\mathrm{d}t
\end{equation*}
A natural choice of meta-time is $\mathbf{T}\left( A\right) =\left\{ \mathbb{%
T}\left( {\bf x},t\right) :\left( {\bf x},t\right) \in A\right\}$, where $\mathbb{T}$\
is the mapping given by%
\begin{equation*}
\mathbb{T}\left( {\bf x},t\right) =\left( {\bf x},\tau _{s}^{+}\left( {\bf x}\right) \right)
\end{equation*}%
and where%
\begin{equation*}
\tau _{t}^{+}\left( {\bf x}\right) =\int_{0}^{t}\tau _{s}\left( {\bf x}\right) \mathrm{d}%
s.
\end{equation*}

The above construction of a meta-time in a tempo-spatial model is very general.
One can  construct a variety of models for the random field $\tau_t({\bf x})$, which lead to new model specifications. Essentially, this leads us back to the problem we tackled in the previous Subsection, where we discussed how such fields can be modelled. For instance, one could model $\tau_t({\bf x})$ by an Ornstein-Uhlenbeck type volatility field or any other model discussed in  
 Subsection \ref{SubSection:VolViaIntegrand}. Clearly, the concrete choice of the model needs to be tailored to the particular application one has in mind.

\subsection{Probability mixing and \Levy mixing}
Volatility modulation can also be obtained through 
 probability mixing as well as \Levy mixing.

The main idea behind the concept of 
probability or distributional mixing is to construct new distributions by randomising a parameter from a given parametric distribution.
\begin{ex}\label{ProbMixEx}
Consider our very first base model \eqref{base}. Now suppose that the corresponding \Levy basis is homogeneous and Gaussian, i.e.~the corresponding \Levy seed is given by 
$L'\sim N(\mu+ \beta \sigma^2,\sigma^2)$ with $\mu, \beta \in \R$ and $\sigma^2 > 0$. Now we use probability mixing and suppose that in fact $\sigma^2$ is random. 
Hence, the conditional law of the \Levy seed is given by $L'|\sigma\sim N(\mu+ \beta \sigma^2,\sigma^2)$.
 Due to the scaling property of the Gaussian distribution, such a model can  be represented as \eqref{ambfi} and, hence, in this particular case probability mixing and stochastic volatility via a stationary stochastic integrand essentially have the same effect. 
Suppose that the conditional variance $\sigma^2$ has a generalised inverse Gaussian (GIG) distribution rather than being just a constant, then $L'$ follows in fact a generalised hyperbolic (GH) distribution. Such a construction falls into the class of normal variance-mean  mixtures.
\end{ex}

In this context it is important to note that  probability/distributional mixing does not generally lead to infinitely divisible distributions, see e.g.~\citet[Chapter VI]{SteutelvanHaarn2004}.
Hence
 \cite{BNPAT2012} propose to work with  
\emph{\Levy mixing} instead of probability/distributional mixing. \Levy mixing is a method which (under mild conditions) leads to classes of infinitely divisible distributions again.
Let us review the main idea behind that concept.

Let $L$ denote a factorisable \Levy basis on $\R^k$ with CQ $(a, b, \nu(dx),c)$. Suppose that the \Levy measure $\nu(dx)$ depends on a possibly multivariate parameter ${\boldsymbol \theta}\in \Theta$, say, where $\Theta$ denotes the parameter space. In that case, we write  $\nu(dx;{\boldsymbol \theta})$.
Then, the generalised \Levy measure of $L$ is given by 
$\nu(dx; {\boldsymbol \theta}) c(d{\bf z})$.
Now let $\gamma$ denote a measure on $\Theta$ and define
\begin{align*}
\tilde n(dx, d{\bf z}) = \int_{\Theta} \nu(dx; {\boldsymbol \theta}) \gamma(d{\boldsymbol \theta})c(d{\bf z}),
\end{align*}
where we assume that 
\begin{align}\label{GammaCond}
\int_{\R}(1 \wedge x^2) \tilde n(dx, d{\bf z}) < \infty.
\end{align}
Then there exists a \Levy basis $\tilde L$ which has $\tilde n$ as its generalised \Levy measure. We call the \Levy basis $\tilde L$ the \Levy basis obtained by \levy-mixing $L$ with the measure $\gamma$.

Let us study a concrete example of \Levy mixing in the following.
\begin{ex}
Suppose $L$ is a homogeneous  \Levy basis with CQ given by $(0,0, \nu(dx), \leb)$ with $\nu(dx) = \theta \delta_1(dx)$ for $\theta > 0$. I.e.~the corresponding \Levy seed is given by  $L'\sim Poi(\theta)$. Now we do a \levy-mixing of the intensity parameter $\theta$. Let
\begin{align*}
\tilde n(dx, d{\bf z}) = \int_{\Theta} \theta  \gamma(d\theta) \delta_1(dx)d{\bf z},
\end{align*}
for a measure $\gamma$ satisfying condition \eqref{GammaCond}. Let $\tilde L$ be the \Levy basis with CQ $(0,0, \int_{\Theta} \theta  \gamma(dv) \delta_1(dx), \leb)$. In that case, the base model \eqref{base} would be transferred into a model of the form
\begin{align*}
\int_{\Theta}\int_{A_t({\bf x})} h({\bf x},t; {\boldsymbol \xi}, s)  \tilde L(d{\boldsymbol \xi},ds,dv).
\end{align*}
\end{ex}
\begin{ex}
Let us consider the example of a (sup)OU process, see \cite{BN2000}, \cite{BNStelzer2010b} and \cite{BNPAT2012}. 
Let $L$ denote a subordinator with \Levy measure $\nu_L$ (and without drift) and consider an OU process
$$Y_t = \int_{-\infty}^t e^{-\theta (t-s)} dL(s),\qquad \theta > 0.$$
A straightforward computation leads to the following expression for its cumulant function (for $\zeta \in \R$):
\begin{align*}
C\{\zeta \ddagger Y_t\} = \int_0^{\infty}\left(e^{i \zeta x}-1\right) \nu(dx; \theta),
\quad \text{ where } \quad 
\nu(dx, \theta) = \int_0^{\infty}\nu_L(e^{\theta u} dx) du
\end{align*}
 is a mixture of $\nu_L$ with the Lebesgue measure. 
A \Levy mixing can be carried out with respect to the parameter $\theta$, i.e.
\begin{align*}
\tilde \nu(dx)= \int_{0}^{\infty}\nu(dx, \theta)\gamma(d\theta),
\end{align*}
where $\gamma$ is a measure on $[0,\infty)$ satisfying $\int_0^{\infty}x \tilde \nu(dx)
<\infty$. Then $\tilde \nu$ is a \Levy measure again. Now, let $\tilde L$ be the \Levy basis with extended \Levy measure 
$$
\nu_L(dx)du \gamma(d\theta),
$$
and define 
the  supOU process $\tilde Y_t$ w.r.t.~$\tilde L$ by 
\begin{align*}
\tilde Y_t= \int_0^{\infty}\int_{-\infty}^t e^{-\lambda (t-s)} \tilde L(ds, d\lambda).
\end{align*}
Then 
the cumulant function of $\tilde Y_t$ is given by 
\begin{align*}
C\{\zeta \ddagger \tilde Y_t\} &= 
 \int_0^{\infty} \int_{-\infty}^t C\{ \zeta e^{-\theta (t-s)} \ddagger L_1\} ds \gamma(d\theta)
=
\int_0^{\infty}\int_0^{\infty} \left(
e^{i \zeta e^{-\theta u}}-1 \right)\nu_L(dx) du \gamma(d\theta)
\\
&=
\int_0^{\infty}\left(e^{i\zeta x}-1\right)\tilde \nu(dx).
\end{align*}
Hence, we have seen that a supOU process can be obtained from an OU process through \levy-mixing.
\end{ex}

\subsection{Outlook on volatility estimation}
Once a model is formulated and data are available the question of assessment of  volatility arises and while we do not discuss this in any detail, tools for this are available for some special classes of ambit processes, see \cite{BNCP2009a,BNCP09,BNCP10b} and also \cite{BNGraversen2011}. Extending these results to general ambit fields and processes is an interesting direction for future research.

\section{Conclusion and outlook}\label{Section:Conclusion}
  In this paper, we have given an overview of some of the main findings in ambit stochastics up to date, including a suitable stochastic integration theory, and have established new results on general properties of ambit field. 
The new results include sufficient conditions which ensure the smoothness of ambit fields. 
Also, 
we have formulated sufficient conditions which guarantee that an ambit field is a semimartingale in the temporal domain. 
 Moreover,  the concept of tempo-spatial stochastic  volatility/intermittency   within ambit fields has been further developed. Here our focus has been on four methods for volatility modulation: Stochastic scaling,  stochastic time change and extended subordination of random measures, and   probability and \Levy mixing of the volatility/intensity parameter.
 
 Future research will focus on applications of  the general classes of models developed in this paper in various fields, including empirical research on turbulence modelling as well as modelling e.g.~the term structure of interest rates in finance by ambit fields. In this context, it will be important to establish a suitable  estimation theory for general ambit fields as well as inference techniques.

\section*{Acknowledgement}
Financial support by the Center for Research in Econometric Analysis of Time
Series, CREATES, funded by the Danish National Research Foundation is gratefully acknowledged by O.~E.~Barndorff-Nielsen.
F.~E.~Benth acknowledges financial support from the Norwegian Research Council through the project "Energy markets: modelling, optimization and simulation" (EMMOS), eVita 205328. 
A.~E.~D.~Veraart acknowledges financial support by CREATES and  by a Marie Curie
FP7 Integration Grant within the 7th European Union Framework Programme.


\bibliographystyle{agsm}
\bibliography{EnergyBib}

\end{document}